\documentclass[10pt,a4paper]{amsart}
\usepackage[T1]{fontenc}
\usepackage[utf8]{inputenc}
\usepackage{amsmath,amssymb,mathtools,mathrsfs}
\usepackage[textwidth=16cm,textheight=22cm,centering]{geometry}
\usepackage[shortlabels]{enumitem}
\usepackage{aliascnt}
\usepackage{etoolbox}
\usepackage[nocompress]{cite}
\usepackage[colorlinks=true,citecolor=red,linkcolor=blue,urlcolor=blue]{hyperref}

\theoremstyle{plain}
\newtheorem{thm}{Theorem}[section]
\newaliascnt{prop}{thm}
\newtheorem{prop}[prop]{Proposition}
\aliascntresetthe{prop}
\newaliascnt{lem}{thm}
\newtheorem{lem}[lem]{Lemma}
\aliascntresetthe{lem}
\theoremstyle{definition}
\newaliascnt{defn}{thm}

\aliascntresetthe{defn}
\theoremstyle{remark}
\newaliascnt{rem}{thm}
\newtheorem{rem}[rem]{Remark}
\aliascntresetthe{rem}

\newcounter{proofnumber}
\newcounter{stp}
\theoremstyle{definition}
\newtheorem{step}[stp]{Step}

\AtBeginEnvironment{proof}{\stepcounter{proofnumber}\setcounter{stp}{0}}
\numberwithin{equation}{section}
\setlist[enumerate]{font=\normalfont,label=(\roman*),leftmargin=*}
\allowdisplaybreaks[2]

\newcommand{\R}{\mathbb{R}}

\renewcommand{\H}{\mathrm{H}}
\newcommand{\rL}{\mathrm{L}}
\newcommand{\rX}{\mathrm{X}}
\newcommand{\rE}{\mathrm{E}}
\newcommand{\rH}{\H}
\newcommand{\rC}{\mathrm{C}}
\newcommand{\rCinfty}{\rC^\infty}

\newcommand{\sA}{\mathcal{A}}

\newcommand{\sD}{\mathcal{D}}

\newcommand{\sL}{\mathcal{L}}

\newcommand{\nablaH}{\nabla_{\H}}
\DeclareMathOperator{\divH}{div_{\H}}

\renewcommand{\d}{\,\mathrm{d}}
\newcommand{\dt}{\partial_t}
\newcommand{\dz}{\partial_z}

\newcommand{\T}{\mathbb{T}}
\newcommand{\rW}{\mathrm{W}}

\newcommand{\sF}{\mathcal{F}}

\newcommand{\EE}{\mathbb{E}}
\newcommand{\PP}{\mathbb{P}}

\title{Global well-posedness of the primitive equations with stochastic wind-driven boundary conditions}
\date{}
\hypersetup{pdftitle={Global well-posedness of the primitive equations with stochastic wind-driven boundary conditions}}
\subjclass[2020]{35Q86, 35R60, 60H15, 76D03, 35K61}
\keywords{Primitive equations, stochastic wind driven boundary conditions,
global pathwise well-posedness, stochastic convolution, maximal regularity,
anisotropic Sobolev spaces.\\
Tarek Z\"ochling gratefully acknowledges the support of the
Deutsche Forschungsgemeinschaft (DFG) through the Research Unit FOR~5528.
He thanks  Antonio Agresti, Tim Binz, Matthias Hieber and Amru Hussein for fruitful discussions
on this topic.}
\author{Tarek Z\"{o}chling\textsuperscript{1}}
\address{\textsuperscript{1}Technische Universit\"{a}t Darmstadt,
Schlo\ss{}gartenstra{\ss}e 7, 64289 Darmstadt, Germany.}
\email{zoechling@mathematik.tu-darmstadt.de}

\begin{document}

\begin{abstract}
Consider the three-dimensional primitive equations subject to Neumann wind stress driven by finitely many
Brownian motions. For arbitrary smooth spatial wind profiles, global pathwise
existence and uniqueness without assuming a vertical derivative of the initial datum is proved,
extending the local theory of Binz, Hieber, Hussein, and Saal~\cite{BHHS-24}
for this class of data and wind profiles. The proof combines a local maximal
$\rL^2$ regularity construction with weighted estimates for the boundary
stochastic convolution.
\end{abstract}
\maketitle

\section{Introduction}

The primitive equations are a standard model for large-scale oceanic and
atmospheric flows, obtained from the Navier--Stokes equations by replacing
the vertical momentum equation with the hydrostatic balance.
Their mathematical theory was developed by Lions, Temam, and Wang
in a series of papers on the atmosphere, the ocean, and their
coupling~\cite{LTW-atmosphere-92,LTW-92,LTW-CAO-93,LTW-CAO-95}.

In this article, we consider the three-dimensional incompressible
primitive equations with full viscosity on a horizontally periodic
layer, subject to a stochastic Neumann wind stress at the upper
boundary and a homogeneous Neumann condition at the bottom.
A local pathwise theory for primitive equations with stochastic wind
driven boundary conditions was established in~\cite{BHHS-24}.
Global continuation was left open there because suitable a priori
estimates in the anisotropic spaces used for the local construction
were unavailable. We address this question for wind stresses generated
by finitely many Brownian motions with smooth spatial profiles.

Our main result, stated in \autoref{thm-global-wind}, establishes global
existence and pathwise uniqueness for suitable large deterministic
initial velocities. The wind profiles are deterministic and independent
of time, and their amplitudes need not be small. This gives a global
pathwise theory for the stated class of initial data and boundary noises,
addressing the global existence question raised in~\cite{BHHS-24}.
For the related two-dimensional
Navier--Stokes problem, global pathwise weak well-posedness under
stochastic Neumann boundary forcing was proved in~\cite{AL-24}.
That result allows spatially rough wind profiles and arbitrary
square-integrable solenoidal initial data, whereas the additional
horizontal regularity used here supports the analysis of the
three-dimensional hydrostatic system.

For the deterministic primitive equations, global weak existence was
established in~\cite{LTW-92}, and Cao and Titi~\cite{CT-07} proved
global existence and uniqueness of strong solutions for arbitrarily
large $\rH^1$ initial data in the classical cylindrical setting.
An $\rL^q$ approach based on the hydrostatic Helmholtz projection
and the hydrostatic Stokes operator was subsequently developed
in~\cite{HK-16}, allowing initial data in suitable interpolation spaces
with less differentiability than $\rH^1$. Anisotropic theories also
admit initial velocities without vertical differentiability,
see~\cite{GGHHK-21}. For general background on geophysical flows and
rotating fluids we refer to~\cite{CDGG-06}, and for the
Navier--Stokes equations to~\cite{Galdi-11,LR-16}.

Wind forcing is also part of the coupled atmosphere--ocean problem,
in which both fluid velocities are unknown and the tangential stress
at the interface depends on their difference. Up to positive physical
and density factors collected in $\kappa$, the physical wind law
in~\cite{LTW-CAO-93,LTW-CAO-95} takes the form
\begin{equation*}
\tau=\kappa|v^{\mathrm a}-v^{\mathrm o}|(v^{\mathrm a}-v^{\mathrm o}).
\end{equation*}
Here the atmospheric and oceanic velocities are evaluated at the
interface. Global strong well-posedness for large data, including
critical Besov data, was proved in~\cite{BBHZ-25} for the CAO system
with this nonlinear coupling. That formulation uses atmospheric pressure
coordinates and the incompressible structure obtained by assuming
constant atmospheric pressure at the interface. Retaining atmospheric
compressibility instead couples compressible primitive equations for
the atmosphere to incompressible primitive equations for the ocean.
This compressible--incompressible CAO problem with the physical wind
law is considered in~\cite{Z-CAO}.

The compressible primitive equations have a related well-posedness
theory, including local strong solutions~\cite{LT-21} and the zero Mach
number limit for well-prepared data~\cite{LT-20}. The hydrostatic
Lagrangian approach in~\cite{HIRZ-25} gives local strong well-posedness
for large data with strictly positive density and global strong
well-posedness near equilibrium. For a model with heat conduction and
gravity, strong well-posedness near equilibrium on prescribed finite
time intervals is established in~\cite{Z-heat-26}.

The interface law also reflects the effect of atmospheric and oceanic
boundary layers. Related analyses of rotating boundary layers and
resonant wind forcing can be found in~\cite{DGV-17,DSR-09}.
Prescribing the atmospheric velocity and replacing the relative
velocity in the stress by that prescribed velocity removes the feedback
from the ocean, so this simplification requires a separate modelling
justification. The effect of friction in wind-driven shallow flows and
an asymptotic derivation of a Navier boundary condition for the primitive
equations are studied in~\cite{BS-01,BGMR-03}. In the present model,
we prescribe the stress directly as a noise that is white in time
and smooth in the horizontal variables.

The proof starts with the Da Prato--Debussche
decomposition~\cite{DPD-03}, also used for the boundary problems
in~\cite{BHHS-24,AL-24}. We write $V=v+Z$, where the stochastic
convolution $Z$ solves the linear hydrostatic Stokes problem with the
prescribed boundary noise and zero initial value. The remainder $v$
then satisfies a random evolution equation with homogeneous boundary
conditions. Additional horizontal regularity in the ground space and
product estimates with opposite vertical orders yield the local maximal
$\rL^2$ regularity theory in \autoref{prop-wind-local-hilbert}.

The key estimate for global continuation concerns the weighted vertical
derivative of $Z$. Even for smooth spatial profiles, the boundary noise
limits the vertical Sobolev regularity of $Z$ to orders below one half.
On the vertical interval $(-h,0)$, multiplication by the distance $-z$
to the noisy wall compensates for the singularity of $\dz Z$.
More precisely, for $1/4<s<1/2$ and every finite $T>0$,
\autoref{prop-wind-path-regularity} yields, almost surely, that
\begin{equation*}
Z\in\rC([0,T],\rH_z^s\rH_\H^3\cap\rL_z^6\rH_\H^3)
\ \text{ and } \ (-z)\dz Z\in\rC([0,T],\rL_z^6\rH_\H^3).
\end{equation*}
These continuous path estimates follow from bounds on the boundary
heat kernel, its weighted vertical derivative, and their time
increments. They supply control of a full vertical derivative after
multiplication by the distance to the boundary.

The vertical velocity $w(v)$ associated with the remainder vanishes
at the upper wall. The Hardy inequality \eqref{eq-wind-hardy} therefore
gives, for $1<\ell<\infty$, that
\begin{equation*}
\|\frac{w(v)}{-z}\|_{\rL^\ell}
\le\frac\ell{\ell-1}\|\divH v\|_{\rL^\ell}
\ \text{ and } \ w(v)\dz Z=\frac{w(v)}{-z}\bigl((-z)\dz Z\bigr).
\end{equation*}
The spatial norms here are taken on the periodic layer. This
factorization combines the vanishing vertical velocity with the
weighted regularity of $Z$ to control the singular term $w(v)\dz Z$.
Applied to the baroclinic component and combined with horizontal
integration by parts, it gives the wind contribution to the
sixth-power estimate in \autoref{step-wind-baroclinic-six} of the proof
of \autoref{prop-wind-global-energy}.

We can then establish the successive global energy estimates for $v$
following the order in~\cite{CT-07}, with the terms containing $Z$
treated explicitly. Starting at a positive regular time, these bounds
continue the remainder in the classical strong class on every finite
time interval. The comparison argument identifies this continuation
with the initial anisotropic solution on their overlap and shows that
all choices of restart time give the same global solution.
The remainder is consequently strong at every positive time, while
global persistence of the original anisotropic maximal regularity
norm is not asserted.

This article is structured as follows. In \autoref{sec-wind-setting},
we formulate the stochastic primitive equations and state the main
theorem with its precise solution class. In
\autoref{sec-wind-stochastic}, we construct the boundary forcing
through an explicit Neumann lifting and establish the weighted
regularity of the stochastic convolution.
\autoref{sec-wind-local} develops the local maximal regularity
theory, the Hardy estimate, and the comparison with classical strong
solutions. In \autoref{sec-wind-energy}, we derive the global
$\rL^\infty_t\rH^1\cap\rL^2_t\rH^2$ bounds for the remainder
started from a positive regular time. Finally,
\autoref{sec-wind-proof} combines these results to construct the
global continuation and prove its independence of the restart time,
adaptedness, and pathwise uniqueness.

\section{Setting and main result}
\label{sec-wind-setting}

We consider the fully viscous primitive equations on a periodic layer,
with stochastic wind stress at the upper boundary and homogeneous
Neumann conditions at the bottom.
For $h,T>0$, we define the spatial domain and the horizontal differential operators by
\begin{equation*}
\Omega=\T^2\times(-h,0),\quad \T^2=(\R/\mathbb Z)^2,\quad
\nablaH=(\partial_{x_1},\partial_{x_2})^\top,\quad
\divH u=\partial_{x_1}u_1+\partial_{x_2}u_2
\ \text{ and } \ \Delta_\H=\partial_{x_1}^2+\partial_{x_2}^2.
\end{equation*}
We write $x=(x_\H,z)$ and fix a filtered probability space
$(\Xi,\sF,(\sF_t)_{t\ge0},\PP)$ satisfying the usual conditions,
independent real Brownian motions $\beta_1,\ldots,\beta_N$, and
 deterministic profiles $g_j\in\rCinfty(\T^2)^2$ without a mean or smallness condition.
With viscosity normalized to one, the horizontal velocity $V$, vertical velocity $W$,
and pressure $P_s$ satisfy
\begin{equation}\label{eq-stochastic-pe}
\left\{
\begin{aligned}
\dt V-\Delta_\H V-\dz^2V+(V\cdot\nablaH)V+W\dz V+\nablaH P_s&=0,
&&\text{in }(0,T)\times\Omega,\\
\dz P_s&=0,
&&\text{in }(0,T)\times\Omega,\\
\divH V+\dz W&=0,
&&\text{in }(0,T)\times\Omega,\\
V(0)&=V_0,
&&\text{in }\Omega.
\end{aligned}
\right.
\end{equation}
For $0<t<T$, the impermeability and wind-stress conditions are given by
\begin{equation}\label{eq-stochastic-boundary}
W|_{z=-h,0}=0,\quad \dz V|_{z=-h}=0
\ \text{ and } \ \dz V|_{z=0}=\sum_{j=1}^Ng_j\dot\beta_j.
\end{equation}
The divergence condition and impermeability determine $W$ from $V$ through
\begin{equation*}
W(t,x_\H,z)=-\int_{-h}^z\divH V(t,x_\H,\zeta)\d\zeta
=\int_z^0\divH V(t,x_\H,\zeta)\d\zeta.
\end{equation*}
All fields are horizontally periodic, with $V$ taking values in $\R^2$ and $W,P_s$
being scalar. The white-noise boundary condition is understood through the stochastic
convolution and the boundary identity \eqref{eq-lift-weak} below. 

To rewrite the problem in operator form and apply the Da Prato--Debussche decomposition, we introduce the hydrostatic Helmholtz projection and the associated hydrostatic Stokes operator.
Let $P_\H$ denote the periodic Helmholtz projection on $\rL^2(\T^2)^2$,
with constant fields included in its range. We define the hydrostatic Helmholtz
projection $P$ on $\rL^2(\Omega)^2$ by
\begin{equation*}
Pu=u-\overline u+P_\H\overline u,
\end{equation*}
where $\overline f$ denotes the vertical average, given by
\begin{equation*}
\overline f(x_\H)=\frac1h\int_{-h}^0f(x_\H,z)\,\d z.
\end{equation*}
We then define the space of hydrostatically solenoidal fields by
\begin{equation*}
\rL_{\bar\sigma}^2(\Omega)
=P\rL^2(\Omega)^2
=\{u\in\rL^2(\Omega)^2\mid
\divH\overline u=0\text{ in }\sD'(\T^2)\}.
\end{equation*}
This space plays the role of the solenoidal velocity space for the
Navier--Stokes equations. For a hydrostatically solenoidal field $u$, we denote
the associated vertical velocity by $w(u)$, using the vertical primitive
introduced above, so that $W=w(V)$. 

With $\Delta=\Delta_\H+\dz^2$, we define the Hilbert-space realization of the
hydrostatic Stokes operator $A$ by
\begin{equation*}
Au=-P\Delta u
\ \text{ and } \
D(A)=\{u\in\rH^2(\Omega)^2\mid Pu=u,\ \dz u|_{z=-h,0}=0\}.
\end{equation*}
In this geometry $P$ commutes with the Neumann Laplacian, so $A$ is nonnegative and
self-adjoint on $\rL_{\bar\sigma}^2(\Omega)$, see~\cite{GGHHK-17}.
Using the $\rL^2$ pivot, we regard $D(A)'$ as an extrapolation space and define
$A_{-1}$ on this space with domain $D(A_{-1})=\rL_{\bar\sigma}^2(\Omega)$.
We specify its action and define the boundary coefficients
$B_j\in D(A)'$ through the identities
\begin{equation*}
\langle A_{-1}u,\varphi\rangle=(u,A\varphi)_2
\ \text{ and } \
\langle B_j,\varphi\rangle=\int_{\T^2}g_j\cdot\varphi(\cdot,0)\d x_\H,
\end{equation*}
valid for every $u\in\rL_{\bar\sigma}^2(\Omega)$ and $\varphi\in D(A)$.
The functionals $B_j$ represent the wind stress through the adjoint of the upper
boundary value trace. The linear stochastic problem associated with \eqref{eq-stochastic-pe} subject to
\eqref{eq-stochastic-boundary} is given by
\begin{equation}\label{eq-stochastic-stokes}
\d Z+A_{-1}Z\d t=\sum_{j=1}^N B_j\d\beta_j
\ \text{ and } \ Z(0)=0.
\end{equation}
The Neumann boundary conditions are imposed through the weak formulation obtained
by integrating the Laplacian by parts. More precisely, for every
$\varphi\in D(A)$, it holds almost surely that
\begin{equation*}
\big(Z(t),\varphi\big)_{\rL^2(\Omega)}
+\int_0^t\big(Z(r),A\varphi\big)_{\rL^2(\Omega)}\,\d r
=\sum_{j=1}^N\beta_j(t)\int_{\T^2}
g_j(x_\H)\cdot\varphi(x_\H,0)\,\d x_\H,
\quad 0\le t\le T.
\end{equation*}
The boundary term on the right represents the prescribed stochastic stress at
$z=0$, while the homogeneous Neumann condition at $z=-h$ produces no boundary
term. Thus the normal derivative of $Z$ is prescribed in this weak sense,
without assuming that it has a classical boundary trace.
We solve \eqref{eq-stochastic-stokes} explicitly by defining the Neumann boundary
heat kernel $K$ by
\begin{equation*}
K_t(z)=\frac1h+\frac2h\sum_{n=1}^\infty
e^{-(n\pi/h)^2t}\cos(n\pi z/h),
\quad 0<t\le T,
\end{equation*}
and setting
\begin{equation}\label{eq-kernel-convolution}
Z(t)=\sum_{j=1}^N\int_0^t e^{-(t-r)A_{-1}}B_j\,\d\beta_j(r)
=\sum_{j=1}^N\int_0^t
P\big(K_{t-r}e^{(t-r)\Delta_\H}g_j\big)\,\d\beta_j(r),
\end{equation}
where $e^{-tA_{-1}}$ is the consistent extension of $e^{-tA}$ to $D(A)'$.
We call $Z$ the
stochastic convolution. The integral construction in
\autoref{prop-wind-path-regularity}, together with \eqref{eq-lift-weak},
shows that it satisfies \eqref{eq-stochastic-stokes} in the weak sense above.
Its zero initial value leaves the full initial datum $V_0$ in the remainder.
To subtract this linear response, we define the convection and its projection by
\begin{equation*}
B(u,v)=u\cdot\nablaH v+w(u)\dz v
\ \text{ and } \ F(u,v)=PB(u,v).
\end{equation*}
For smooth fields with zero normal velocity, incompressibility gives the
conservative form
\begin{equation*}
B(u,v)=\sum_{j=1}^2\partial_{x_j}(u_jv)+\dz(w(u)v),
\end{equation*}
which also defines the products by duality at lower regularity.
Following the Da Prato--Debussche method~\cite{DPD-03}, as used for this boundary
problem in~\cite[Section 5.1]{BHHS-24}, we define $v=V-Z$. Since $Z$ carries the
wind stress, the remainder has homogeneous Neumann conditions, and subtracting
\eqref{eq-stochastic-stokes} from \eqref{eq-stochastic-pe} subject to
\eqref{eq-stochastic-boundary} yields
\begin{equation}\label{eq-remainder}
\begin{aligned}
\dt v+A_{-1}v&=-F(v+Z,v+Z),\quad Pv=v,
\quad v(0)=V_0 \ \text{ and } \
w(v)|_{z=-h,0}= \dz v|_{z=-h,0}=0.
\end{aligned}
\end{equation}
The evolution equation is understood in $D(A)'$, and the homogeneous Neumann
condition is encoded weakly by $A_{-1}$. When $v\in D(A)$, it holds in the
Sobolev trace sense and $A_{-1}v=Av$.
For each fixed sample path, this is a deterministic evolution equation whose
coefficients depend on $Z$. The mixed terms $F(v,Z)$ and $F(Z,v)$ are linear in
$v$, while $F(Z,Z)$ is a prescribed forcing term. The estimates below justify
these products without renormalization.

To state the regularity near time zero, we use the notation
$\rH_z^s\rH_\H^m$ for the vertical Neumann Sobolev scale with values in the
horizontal periodic Sobolev space. The vertical scale is defined by cosine
expansion, or equivalently even reflection across the walls, with negative orders
interpreted by duality. In particular, $\rH_z^{-1}=(\rH_z^1)'$.
A missing exponent is zero, spatial norms without a domain are taken on $\Omega$,
and norms on $\T^2$ are indicated explicitly.
We use the same symbol $P$ for its consistent bounded extensions to these
anisotropic Sobolev spaces. To formulate the local problem, we define its
ground space $\rX_0$ by
\begin{equation*}
\rX_0=P\rH_z^{-1}\rH_\H^2.
\end{equation*}
The anisotropic realization $\sA$ on $\rX_0$ is defined by
\begin{equation*}
\sA u=A_{-1}u
\ \text{ and } \ \rX_1=D(\sA)=P(\rH_z^1\rH_\H^2\cap\rH_z^{-1}\rH_\H^4).
\end{equation*}
We equip $\rX_1$ with the graph norm of $\sA$. The operator identity is
understood in $D(A)'$, into which $\rX_0$ embeds, and its right-hand side
takes values in $\rX_0$ for $u\in\rX_1$.
The Neumann condition is encoded in this realization, since elements of
$\rX_1$ need not have a trace of their first vertical derivative.
The operators $A$, $A_{-1}$, and $\sA$ agree on intersections of their domains,
and their semigroups agree on intersections of the underlying spaces.
To specify the initial trace of the local maximal regularity solution,
we define $\rX_\gamma$ by
\begin{equation*}
\rX_\gamma=(\rX_0,\rX_1)_{1/2,2}
=P\big(\rL_z^2\rH_\H^2\cap\rH_z^{-1}\rH_\H^3\big).
\end{equation*}
In particular, $P\rL_z^2\rH_\H^3\hookrightarrow\rX_\gamma$.
For $v$ in the anisotropic maximal regularity class, the
evolution equation in \eqref{eq-remainder} takes the form
\begin{equation}\label{eq-remainder-anisotropic}
\begin{aligned}
\dt v+\sA v&=-F(v+Z,v+Z),\quad Pv=v
,\quad v(0)=V_0 \ \text{ and } \
w(v)|_{z=-h,0} =  \dz v|_{z=-h,0}=0.
\end{aligned}
\end{equation}
Here the homogeneous Neumann condition is understood weakly through $\sA$.
A pathwise solution of \eqref{eq-stochastic-pe} subject to
\eqref{eq-stochastic-boundary} is an adapted velocity $V=v+Z$ whose remainder
solves \eqref{eq-remainder} and whose boundary stress is understood through
\eqref{eq-lift-weak}. Its initial-time and positive-time regularity are specified
in the following theorem.

\begin{thm}\label{thm-global-wind}
Let $T>0$ be arbitrary, let $V_0\in P\rL_z^2\rH_\H^3$ be deterministic
and let $g_1,\ldots,g_N\in\rCinfty(\T^2)^2$.
Then the system \eqref{eq-stochastic-pe} subject to the boundary conditions
\eqref{eq-stochastic-boundary} admits an adapted pathwise solution
$V=v+Z$ on $[0,T]$. There is a strictly positive stopping time
$\tau\le T$ such that, almost surely, it holds that
\begin{equation*}
v\in\rH^1(0,\tau,\rX_0)\cap\rL^2(0,\tau,\rX_1)
\cap\rC([0,\tau],\rX_\gamma).
\end{equation*}
At positive times the remainder is a strong solution, and for every
$0<\delta<T$ it holds that
\begin{equation*}
v\in\rH^1(\delta,T,\rL_{\bar\sigma}^2(\Omega))
\cap\rL^2(\delta,T,D(A))\cap\rC([\delta,T],P\rH^1(\Omega)^2).
\end{equation*}
On the full time interval, it also satisfies
\begin{equation*}
v\in\rC([0,T],\rL_{\bar\sigma}^2(\Omega))
\cap\rL^2(0,T,P\rH^1(\Omega)^2).
\end{equation*}
The solution is pathwise unique among adapted solutions with the same
initial datum and Brownian motions whose remainders have these
initial-time and positive-time regularity properties.
Solutions on different finite time intervals agree upon restriction,
and hence define a unique global solution.
\end{thm}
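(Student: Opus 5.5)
The proof follows the outline in the introduction and assembles the results of the later sections pathwise. First, \autoref{prop-wind-path-regularity} fixes a full-measure event on which the stochastic convolution satisfies
$Z\in\rC([0,T],\rH_z^s\rH_\H^3\cap\rL_z^6\rH_\H^3)$ and $(-z)\dz Z\in\rC([0,T],\rL_z^6\rH_\H^3)$ for some $1/4<s<1/2$. By \eqref{eq-lift-weak}, $Z$ solves \eqref{eq-stochastic-stokes} in the weak sense, so it carries the wind stress. On this event the remainder equation \eqref{eq-remainder-anisotropic} is a deterministic semilinear problem with coefficients depending on $Z$. Since $V_0\in P\rL_z^2\rH_\H^3\hookrightarrow\rX_\gamma$, \autoref{prop-wind-local-hilbert} gives a unique maximal regularity solution $v\in\rH^1(0,\tau,\rX_0)\cap\rL^2(0,\tau,\rX_1)\cap\rC([0,\tau],\rX_\gamma)$. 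The existence time $\tau>0$ depends only on norms of $V_0$ and of $Z$ on $[0,t]$. Because these norms are continuous adapted processes, $\tau$ can be chosen as a strictly positive stopping time. Adaptedness of $v$ on $[0,\tau]$ follows from the fixed point construction: a limit of adapted Picard iterates, or equivalently the measurable dependence of the solution map on $Z|_{[0,t]}$.

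Second, the solution is continued to $[0,T]$. From $v\in\rL^2(0,\tau,\rX_1)$ and $\rX_1\hookrightarrow P\rH^1(\Omega)^2$ (use $\rH_z^1\rH_\H^2$), we get $v(t_0)\in P\rH^1(\Omega)^2$ for almost every $t_0\in(0,\tau)$. Fix such a restart time $t_0$. Starting from $v(t_0)$, the strong local theory in $\rH^1$ (the comparison part of \autoref{sec-wind-local}) gives a strong solution on a short interval. \autoref{prop-wind-global-energy} then supplies a priori bounds in $\rL^\infty_t\rH^1\cap\rL^2_t\rH^2$ on $[t_0,T]$ that depend only on $\|v(t_0)\|_{\rH^1}$ and on the path norms of $Z$. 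A standard blow-up alternative therefore extends this solution to $[t_0,T]$, giving $v\in\rH^1(t_0,T,\rL^2_{\bar\sigma})\cap\rL^2(t_0,T,D(A))\cap\rC([t_0,T],P\rH^1)$.

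The comparison result of \autoref{sec-wind-local} identifies the strong continuation with the anisotropic solution on $[t_0,\tau]$. Two restart times $t_0<t_0'$ produce strong solutions that agree at $t_0'$, so by strong uniqueness they coincide on $[t_0',T]$. Letting $t_0\downarrow0$ along admissible times yields a single solution with the stated regularity on $[\delta,T]$ for every $\delta>0$.

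The full-interval statement $v\in\rC([0,T],\rL^2_{\bar\sigma})\cap\rL^2(0,T,P\rH^1)$ combines two facts. Near $0$ it follows from $\rC([0,\tau],\rX_\gamma)\hookrightarrow\rC([0,\tau],\rL^2)$ and $\rL^2(0,\tau,\rX_1)\hookrightarrow\rL^2(0,\tau,\rH^1)$. Away from $0$ it follows from the strong regularity. Adaptedness of the continuation holds because on each $[t_0,t]$ the solution is a measurable function of $v(t_0)$ and $Z|_{[t_0,t]}$.

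Pathwise uniqueness is proved by taking two solutions with the stated properties and setting $u=v_1-v_2$. On $[0,\tau_1\wedge\tau_2]$ they coincide by uniqueness in the maximal regularity class. They then agree at some common positive time, and from there on they agree by uniqueness of strong solutions, which an $\rL^2$ energy estimate for $u$ gives. The difference $Z$ cancels, and the term $w(u)\dz Z$ is handled by the Hardy factorization \eqref{eq-wind-hardy}. Consistency across different $T$ follows from this uniqueness applied to restrictions.

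The main obstacle is the global $\rH^1$ estimate in the second step, specifically the sixth-power baroclinic estimate, where the singular term $w(v)\dz Z$ must be controlled. Even for smooth wind profiles, $\dz Z$ is not square integrable uniformly up to the noisy wall. The plan is to write $w(v)\dz Z=\frac{w(v)}{-z}\bigl((-z)\dz Z\bigr)$. The factor $\frac{w(v)}{-z}$ is bounded in $\rL^\ell$ by $\frac\ell{\ell-1}\|\divH v\|_{\rL^\ell}$ via \eqref{eq-wind-hardy}, and the factor $(-z)\dz Z$ is bounded in $\rL_z^6\rH_\H^3$. Integrating by parts horizontally moves the $\divH$ derivative onto the test function $|\tilde v|^4\tilde v$, which closes a Gronwall estimate in the Cao--Titi order.
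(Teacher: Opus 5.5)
Your proposal follows the paper's proof essentially step by step: the anisotropic local solution up to a positive stopping time from \autoref{prop-wind-local-hilbert}; restart at an almost-every regular time using \autoref{prop-wind-local-strong} and the bound of \autoref{prop-wind-global-energy}; identification with the local solution and independence of the restart time via \autoref{lem-wind-relative-energy}; and pathwise uniqueness by local maximal-regularity uniqueness followed by strong uniqueness. The one place you are looser is adaptedness of the continuation, where the restart time must be deterministic rather than chosen pathwise; the paper handles this with Tonelli's theorem, picking deterministic $a_n\downarrow0$ with $v(a_n)\in P\rH^1$ almost surely on $E_n=\{a_n<\tau\}\in\sF_{a_n}$, and then, for each $t>0$, selecting the first $n$ with $a_n<t$ on $E_n$.
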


The remainder $v=V-Z$ is first constructed using $\sA$ in the pair $(\rX_0,D(\sA))$ and
then continued from a positive regular time in
$(\rL_{\bar\sigma}^2(\Omega),D(A))$ using $A$. The energy bound
$v\in\rL^2(0,T,\rH^1(\Omega)^2)$ connects these constructions through the
comparison argument. Classical strong maximal regularity from time zero would
require $V_0\in P\rH^1(\Omega)^2$, whereas the anisotropic initial-time class
allows the data stated in the theorem. Global persistence in the original pair
would require additional horizontal estimates, since $\rH^2(\Omega)^2$ does
not embed into $\rX_1$, and is not asserted here.

\section{The boundary forcing and its regularity}
\label{sec-wind-stochastic}

We realize the wind stress by an explicit spatial lifting, which identifies
the boundary distribution in the stochastic Stokes equation, and then
estimate the convolution and its weighted vertical derivative to obtain
the coefficient bounds needed for the remainder equation.
To lift the prescribed boundary stress, for $g\in\rCinfty(\T^2)^2$
we define $\sL g$ by
\begin{equation*}
\sL g=P\left(\frac{(z+h)^2}{2h}g\right)
=\frac{(z+h)^2}{2h}g-\frac h6(I-P_\H)g.
\end{equation*}
The projection correction is independent of $z$, so differentiation gives
\begin{equation*}
P\sL g=\sL g,\quad \dz\sL g|_{z=-h}=0
\ \text{ and } \ 
\dz\sL g|_{z=0}=g.
\end{equation*}
The scalar factor is $(z+1)^2/2$ when $h=1$, and direct differentiation
gives the interior contribution
\begin{equation*}
-P\Delta\sL g=-\Delta_\H\sL g-\frac1hP_\H g.
\end{equation*}
This contribution must be retained when the boundary condition is
rewritten as a forcing. For every $\varphi\in D(A)$, integration by parts
with the extrapolated operator introduced in the setting gives
\begin{equation}\label{eq-polynomial-boundary}
\langle A_{-1}\sL g+P\Delta\sL g,\varphi\rangle
=\sum_{\ell=1}^3\int_{\Omega}\partial_\ell\sL g\cdot\partial_\ell\varphi\d x
+\int_{\Omega}\Delta\sL g\cdot\varphi\d x
=\int_{\T^2}g\cdot\varphi(\cdot,0)\d x_\H.
\end{equation}
Thus the polynomial lifting represents profiles with arbitrary mean, and
\eqref{eq-polynomial-boundary} identifies the coefficients in
\eqref{eq-stochastic-stokes} through
\begin{equation*}
B_j=A_{-1}\sL g_j+P\Delta\sL g_j.
\end{equation*}
Their regularity follows directly from the boundary pairing. For
$0<\varepsilon<1/2$ and every smooth hydrostatically solenoidal test field
$\varphi$ satisfying the homogeneous Neumann conditions, horizontal Sobolev
duality and the trace theorem with values in $\rH_\H^{-3}$ give
\begin{equation*}
\begin{aligned}
|\langle B_j,\varphi\rangle|
&=|\int_{\T^2}g_j\cdot\varphi(\cdot,0)\d x_\H|
\le \|g_j\|_{\rH_\H^3}\|\varphi(\cdot,0)\|_{\rH_\H^{-3}}\le C_\varepsilon\|g_j\|_{\rH_\H^3}
\|\varphi\|_{\rH_z^{1/2+\varepsilon}\rH_\H^{-3}}.
\end{aligned}
\end{equation*}
By density and duality, we obtain
\begin{equation*}
B_j\in P\rH_z^{-1/2-\varepsilon}\rH_\H^3
\ \text{ and } \ 
\|B_j\|_{\rH_z^{-1/2-\varepsilon}\rH_\H^3}
\le C_\varepsilon\|g_j\|_{\rH_\H^3}.
\end{equation*}

To relate the lifting to the explicit convolution, we expand the boundary
functional in the horizontal Fourier and vertical cosine basis, which gives
\begin{equation}\label{eq-projected-kernel}
e^{-tA_{-1}}(A_{-1}\sL g+P\Delta\sL g)
=P(K_te^{t\Delta_\H}g)
=\bigg(K_tI-\frac1h(I-P_\H)\bigg)e^{t\Delta_\H}g.
\end{equation}
For $t>0$, analyticity and consistency of the semigroups also yield
\begin{equation*}
e^{-tA}\sL g\in D(A)
\ \text{ and } \ e^{-tA_{-1}}A_{-1}\sL g=Ae^{-tA}\sL g.
\end{equation*}
Substituting these identities into \eqref{eq-kernel-convolution} therefore gives
\begin{equation*}
Z(t)=\sum_{j=1}^N\int_0^t
\big(Ae^{-(t-r)A}\sL g_j+e^{-(t-r)A}P\Delta\sL g_j\big)\d\beta_j(r).
\end{equation*}
The estimates in \autoref{prop-wind-path-regularity} justify the stochastic
integral near $r=t$. The second term cancels the interior contribution
of the lifting, leaving precisely the boundary forcing. The process
$\sum_j\sL g_j\beta_j(t)$ itself has upper derivative
$\sum_jg_j\beta_j(t)$ and hence does not impose the prescribed stress
$\sum_jg_j\dot\beta_j(t)$.

\begin{rem}[Comparison with the stationary Neumann map]
For mean-zero $g$, the stationary Neumann map may be taken directly
from~\cite[Proposition 3.5]{BHHS-24}. Its interior equation is
$-P\Delta Ng=0$, so integration by parts as in
\eqref{eq-polynomial-boundary} gives
\begin{equation*}
A_{-1}Ng=A_{-1}\sL g+P\Delta\sL g.
\end{equation*}
The mean condition is necessary for that unshifted stationary problem
and is imposed on the noise in~\cite[equation (4.4)]{BHHS-24}.
The polynomial lifting thus recovers the boundary-forcing formulation
of Da Prato and Zabczyk~\cite[Chapter 13]{DPZ-96}, used
in~\cite[Section 4.2]{BHHS-24}, while also allowing profiles with nonzero mean.
\end{rem}

Our next task is to identify the regularity of the stochastic convolution $Z$.
Stochastic maximal regularity yields a gain of one half of a power of the
second-order operator in square-integrable time norms
by~\cite[Theorem 1.1]{NVW-12}. Applied on the corresponding extrapolation
scale, this gives, almost surely, that
\begin{equation*}
Z\in\rL^2(0,T,\rH_z^{1/2-\varepsilon}\rH_\H^3),
\quad 0<\varepsilon<1/2.
\end{equation*}
We prove below the corresponding continuity for vertical orders below
$1/2$ and additional regularity of $(-z)\dz Z$, using the distance from
the noisy upper boundary to compensate for the singularity of the vertical
derivative.

\begin{prop}\label{prop-wind-path-regularity}
Fix $1/4<s<1/2$. The convolution in \eqref{eq-kernel-convolution} has
an adapted modification with $Z(0)=0$ such that, for every finite $T$,
almost surely, it holds that
\begin{equation*}
Z\in\rC([0,T],\rH_z^s\rH_\H^3\cap\rL_z^6\rH_\H^3)
\ \text{ and } \ 
(-z)\dz Z\in\rC([0,T],\rL_z^6\rH_\H^3).
\end{equation*}
Every finite moment of these continuous path norms is finite. The paths
are H\"older continuous in the first Sobolev norm with every exponent
less than $1/4-s/2$, and in the two $\rL_z^6\rH_\H^3$ norms with every
exponent less than $1/12$. In particular, it holds that
\begin{equation*}
Z,\ (-z)\dz Z\in\rC([0,T],\rL_z^6\rW_\H^{1,\infty}).
\end{equation*}
\end{prop}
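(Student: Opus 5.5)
We use the explicit representation \eqref{eq-kernel-convolution}, which after \eqref{eq-projected-kernel} reads
\begin{equation*}
Z(t)=\sum_{j=1}^N\int_0^t\Big(K_{t-r}I-\frac1h(I-P_\H)\Big)e^{(t-r)\Delta_\H}g_j\,\d\beta_j(r).
\end{equation*}
Since $g_j$ is smooth and $e^{t\Delta_\H}$ is a contraction on every $\rH_\H^m$, the horizontal part contributes only the harmless factor $\|g_j\|_{\rH_\H^3}$ and is continuous in $r$. The problem therefore reduces to scalar estimates on the vertical kernel $K_t(z)$ and on $(-z)\dz K_t(z)$, tensorized with $\rH_\H^3$. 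The term $\frac1h(I-P_\H)e^{(t-r)\Delta_\H}g_j$ is independent of $z$ and smooth, so its contribution is a Hölder continuous Gaussian process in every norm involved and can be set aside.

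We first establish the deterministic kernel bounds. The Neumann heat kernel on $(-h,0)$ evaluated at the wall behaves like the whole-line Gaussian with its reflections, so for $0<t\le T$
\begin{equation*}
|K_t(z)|\lesssim t^{-1/2}e^{-cz^2/t}+1,\qquad |(-z)\dz K_t(z)|\lesssim t^{-1/2}\frac{z^2}{t}e^{-cz^2/t}+1 \lesssim t^{-1/2}e^{-c'z^2/t}+1.
\end{equation*}
Integrating in $z$ gives $\|K_t\|_{\rL^p_z}+\|(-z)\dz K_t\|_{\rL^p_z}\lesssim t^{-1/2+1/(2p)}$. For $p=6$ this is $t^{-5/12}$, which is square integrable. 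From the cosine expansion, $\|K_t\|_{\rH^s_z}^2\simeq\sum_n n^{2s}e^{-2(n\pi/h)^2t}\lesssim t^{-s-1/2}$, which is integrable in $t$ exactly because $s<1/2$. For the time increments we estimate $\|K_{t+\eta}-K_t\|$ in the same norms by interpolating between the crude bound $2\|K_t\|$ and the derivative bound $\eta\|\dt K_t\|$, with $\dt K=\dz^2K$ producing an extra factor $t^{-1}$. This yields
\begin{equation*}
\int_0^\infty\|K_{r+\eta}-K_r\|^2_{\rH^s_z}\d r\lesssim\eta^{1/2-s-\epsilon},\qquad \int_0^\infty\|K_{r+\eta}-K_r\|^2_{\rL^6_z}\d r\lesssim \eta^{1/6-\epsilon},
\end{equation*}
and the same bound holds for the weighted derivative. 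The contribution $\int_t^{t+\eta}\|K_r\|^2\d r$ coming from the new interval has the same orders.

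Next comes the stochastic step. Each of $\rH^s_z\rH^3_\H$ and $\rL^6_z\rH^3_\H$ is a UMD space of type two, so the Burkholder (Itô isomorphism) inequality together with Gaussianity bounds every moment:
\begin{equation*}
\EE\|Z(t+\eta)-Z(t)\|^p\lesssim_p\big(\text{square-function bounds above}\big)^{p/2}.
\end{equation*}
For the $\rL^6_z$ norms we apply the square function pointwise in $(x_\H,z)$, for example through the $\gamma$-radonifying norm identified with $\rL^6(\rL^2_r)$. This gives $\EE\|\cdot\|^p\lesssim\eta^{p(1/4-s/2-\epsilon)}$ and $\eta^{p(1/12-\epsilon)}$, respectively. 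Kolmogorov's continuity theorem, applied with $p$ large, then produces a continuous modification with the stated Hölder exponents and finite moments of the path suprema. Adaptedness holds because each $Z(t)$ is $\sF_t$-measurable. The modification of $(-z)\dz Z$ is identified with the derivative of the modification of $Z$ by pairing with test functions. The final inclusion in $\rL^6_z\rW^{1,\infty}_\H$ follows from $\rH^3(\T^2)\hookrightarrow\rW^{1,\infty}(\T^2)$.

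The main obstacle is proving the weighted derivative kernel bound and its increment bound uniformly in $z$, including near both walls. The cosine series does not reveal the Gaussian localization at $z=0$ directly. We would therefore first rewrite $K_t$ by Poisson summation as a sum of reflected Gaussians $\sum_k G_t(z-2kh)$. Only the $k=0$ term is singular at $z=0$, and the others are smooth for $t\le T$. The weight $(-z)$ then exactly converts $\dz G_t\sim(z/t)G_t$ into $(z^2/t)G_t$, which is again bounded by a Gaussian of width $\sqrt t$.
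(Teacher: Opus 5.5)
Your proposal is correct and follows essentially the same route as the paper. The paper likewise uses Poisson summation into reflected Gaussians to get the $t^{-5/12}$ and $t^{-1/4-s/2}$ kernel bounds, with a factor $t^{-1}$ for time derivatives, the same small-lag/large-lag splitting of increments, and a square-function bound in $\rL_z^6\rH_\H^3$ (your $\gamma$-radonifying formulation is equivalent to its pointwise Hilbert-valued Burkholder--Davis--Gundy inequality plus Minkowski). It then applies Kolmogorov with arbitrarily high moments and identifies $(-z)\dz Z$ by testing. Your $\epsilon$-loss in the increment exponents is unnecessary but harmless, and the time increment of the horizontal factor $e^{(t-r)\Delta_\H}g_j$, which you only call continuous, is easily controlled since $g_j$ is smooth; the paper uses a $Ct^{-1}$ bound on its time derivative in $\rH_\H^3$.
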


\begin{proof}
The proof is divided into two steps. We first estimate the boundary heat
kernel and its time derivative, then use these bounds to construct the
stochastic integrals and prove their continuity through higher-moment
increment estimates.

\begin{step}[Kernel estimates]
To estimate $K_t$ near the upper boundary, we first rewrite its cosine
series as a sum of Gaussians. For every $n\in\mathbb Z$, the $n$-th
Fourier coefficient of the $2h$-periodic sum satisfies
\begin{equation*}
\frac1{2h\sqrt{\pi t}}\int_{-h}^{h}
\sum_{m\in\mathbb Z}e^{-(z-2mh)^2/(4t)}e^{-in\pi z/h}\d z
=\frac1{2h\sqrt{\pi t}}\int_{\R}e^{-y^2/(4t)}e^{-in\pi y/h}\d y
=\frac1h e^{-t(n\pi/h)^2}.
\end{equation*}
Pairing the coefficients with indices $n$ and $-n$ recovers the cosine
series defining $K_t$, and therefore gives
\begin{equation*}
K_t(z)=\frac1{\sqrt{\pi t}}\sum_{m\in\mathbb Z}
e^{-(z-2mh)^2/(4t)}.
\end{equation*}
For the term $m=0$, the substitution $z=\sqrt t\,y$ yields
\begin{equation*}
\|\frac1{\sqrt{\pi t}}e^{-z^2/(4t)}\|_{\rL_z^6}
=\frac{t^{-5/12}}{\sqrt\pi}
\bigg(\int_{-h/\sqrt t}^{0}e^{-3y^2/2}\d y\bigg)^{1/6}
\le\frac{t^{-5/12}}{\sqrt\pi}
\bigg(\int_{-\infty}^{0}e^{-3y^2/2}\d y\bigg)^{1/6}
\le Ct^{-5/12}.
\end{equation*}
Applying $(-z)\dz$ or $t\dt$ multiplies this Gaussian by $y^2/2$
or $-1/2+y^2/4$, respectively. Their composition also produces a
fixed polynomial in $y$. The same substitution therefore gives
$Ct^{-5/12}$ for each resulting norm, since every polynomial is
integrable against the Gaussian weight.
For $m\ne0$ and $-h\le z\le0$, it holds that
\begin{equation*}
|z-2mh|\ge(2|m|-1)h.
\end{equation*}
The remaining terms and their indicated derivatives are consequently
bounded uniformly for $0<t\le T$ by summing the Gaussian tails.
The cosine series also gives the Sobolev estimates by summing
$(1+n^2)^se^{-2t(n\pi/h)^2}$ and its time-differentiated counterpart.
Combining these calculations yields, for $0<t\le T$, that
\begin{equation*}
\begin{aligned}
\|K_t\|_{\rL_z^6}+\|(-z)\dz K_t\|_{\rL_z^6}
+t\|\dt K_t\|_{\rL_z^6}+t\|(-z)\dz\dt K_t\|_{\rL_z^6}
&\le C_Tt^{-5/12},\\
\|K_t\|_{\rH_z^s}+t\|\dt K_t\|_{\rH_z^s}
&\le C_Tt^{-1/4-s/2}.
\end{aligned}
\end{equation*}
The horizontal heat semigroup is bounded on $\rH_\H^3$, its time
derivative costs at most $Ct^{-1}$ in that norm, and the projection
correction in \eqref{eq-projected-kernel} is constant in $z$ and
bounded on $\rH_\H^3$. Hence the same powers of $t$ control each
integrand in the assertion, including its time derivative.
\end{step}

\begin{step}[Stochastic integration and continuity]
The squared kernel bounds are integrable at zero because $s<1/2$.
More explicitly, for deterministic elementary integrands $\Phi_j$ and
$r\ge6$, Minkowski's inequality and the Hilbert-valued
Burkholder--Davis--Gundy inequality \cite[Theorem~4.4]{NVW-07} give
\begin{equation*}
\begin{aligned}
\bigg(\EE\|\sum_{j=1}^N\int_0^T \Phi_j(\rho)\d\beta_j(\rho)
\|_{\rL_z^6\rH_\H^3}^{r}\bigg)^{1/r}
&\le
\|\bigg(\EE\|\sum_{j=1}^N\int_0^T
\Phi_j(\rho,z)\d\beta_j(\rho)\|_{\rH_\H^3}^{r}
\bigg)^{1/r}\|_{\rL_z^6}\\
&\le C_r\|\bigg(\sum_{j=1}^N\int_0^T
\|\Phi_j(\rho,z)\|_{\rH_\H^3}^2\d\rho\bigg)^{1/2}
\|_{\rL_z^6}\\
&\le C_r\bigg(\sum_{j=1}^N\int_0^T
\|\Phi_j(\rho)\|_{\rL_z^6\rH_\H^3}^2\d\rho\bigg)^{1/2}.
\end{aligned}
\end{equation*}
The first inequality uses $r\ge6$, the second applies the Hilbert-valued
inequality at each $z$, and the last is another application of
Minkowski's inequality. Direct application of the Hilbert-valued inequality
gives the same final bound with $\rL_z^6\rH_\H^3$ replaced by
$\rH_z^s\rH_\H^3$. Approximation by elementary integrands therefore
constructs the convolution and its weighted derivative in the three spatial
norms, since the kernel estimates make these right-hand sides finite.

To obtain continuity in these same spatial norms, we now estimate
temporal increments in arbitrarily high moments. This uses the
deterministic time-independent profiles and the strict inequality
$s<1/2$, and is additional to the square-integrable time estimate.

For $0<\delta\le T-t$, we split the increment of the stochastic
integral into the contribution from the new time interval and the
difference of the kernels on the old interval. On the old interval
with lag less than $\delta$, we estimate both kernels separately by
their size bounds. For lag at least $\delta$, we use the time
derivative bound and the fundamental theorem of calculus.
If the kernel exponent is $a<1/2$, the new interval and the small
old lags each contribute at most a constant times the first integral
below. We therefore obtain
\begin{equation*}
\int_0^\delta t^{-2a}\d t
+\delta^2\int_\delta^T t^{-2a-2}\d t
\le C_{T,a}\delta^{1-2a}.
\end{equation*}
Using this inequality with $a=1/4+s/2$ and $a=5/12$, respectively,
the preceding stochastic integral estimates yield, for every $r\ge6$, that
\begin{equation*}
\begin{aligned}
\bigg(\EE\|Z(t+\delta)-Z(t)\|_{\rH_z^s\rH_\H^3}^r\bigg)^{1/r}
&\le C_{T,r}\delta^{1/4-s/2},\\
\bigg(\EE\bigg[
\|Z(t+\delta)-Z(t)\|_{\rL_z^6\rH_\H^3}
+\|(-z)\dz(Z(t+\delta)-Z(t))\|_{\rL_z^6\rH_\H^3}
\bigg]^r\bigg)^{1/r}
&\le C_{T,r}\delta^{1/12}.
\end{aligned}
\end{equation*}
The constants depend on $\sum_j\|g_j\|_{\rH_\H^3}^2$.
For fixed $r$, the quantitative Kolmogorov continuity theorem gives
H\"older exponents less than $1/4-s/2-1/r$ in the Sobolev norm
and less than $1/12-1/r$ in the two $\rL_z^6\rH_\H^3$ norms,
provided these upper bounds are positive. Since $r$ can be chosen
arbitrarily large, this proves the asserted H\"older regularity and,
in particular, continuity in all three spatial norms. Thus the
continuous-path conclusion follows from these higher-moment increment
bounds, rather than from the $\rL^2$-in-time estimate alone.

For any prescribed finite moment of the path norms, we choose $r$
larger than that moment and large enough for the preceding
Kolmogorov estimates. Their quantitative form, together with the
zero initial value, gives the required path-norm moments.
Lower moments follow from H\"older's inequality.

Testing on compact vertical subintervals and applying stochastic Fubini
identifies the weighted integral with the interior distribution
$(-z)\dz Z$, so the modifications agree in their common spaces and
remain adapted under the usual conditions on the filtration.
Finally, the embedding $\rH_\H^3\hookrightarrow\rW_\H^{1,\infty}$
gives the last assertion.\qedhere
\end{step}
\end{proof}
To justify the later energy tests, we also need spatially smooth
approximations whose weighted norms remain controlled.

\begin{prop}\label{prop-wind-regularization}
For $\varepsilon>0$, we define the spatial approximation by
$Z^\varepsilon=e^{-\varepsilon A}Z$. It has continuous paths with
arbitrary spatial Sobolev regularity and homogeneous Neumann conditions.
For every finite $T$ and $1\le r<\infty$, it holds that
\begin{equation*}
\EE\big[
\|Z^\varepsilon-Z\|_{\rC([0,T],\rH_z^s\rH_\H^3)}
+\|Z^\varepsilon-Z\|_{\rC([0,T],\rL_z^6\rH_\H^3)}
+\|(-z)\dz(Z^\varepsilon-Z)\|_{\rC([0,T],\rL_z^6\rH_\H^3)}
\big]^r\longrightarrow0.
\end{equation*}
The corresponding norms of $Z^\varepsilon$ have uniformly bounded
moments for $0<\varepsilon\le1$. Along a deterministic sequence
$\varepsilon_n\downarrow0$, the convergence holds almost surely
on every finite time interval.
\end{prop}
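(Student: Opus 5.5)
Since $Z$ is a stochastic convolution with deterministic operators, the plan is to write $Z^\varepsilon$ as the same convolution with a smoothed kernel and rerun the kernel estimates and the Kolmogorov argument of \autoref{prop-wind-path-regularity} for the difference $Z^\varepsilon-Z$. The constants will carry a factor that tends to zero as $\varepsilon\to0$. Because $e^{-\varepsilon A}$ is bounded on the extrapolation scale and commutes with $e^{-(t-r)A_{-1}}$, \eqref{eq-kernel-convolution} and \eqref{eq-projected-kernel} give
\begin{equation*}
Z^\varepsilon(t)=\sum_{j=1}^N\int_0^tP\big(K_{t-r+\varepsilon}e^{(t-r+\varepsilon)\Delta_\H}g_j\big)\d\beta_j(r).
\end{equation*}
This uses the semigroup law for $K$ and for the horizontal heat semigroup, together with the commutation of $P$ with the Neumann Laplacian. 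For $\varepsilon>0$ the kernel is smooth in $z$ uniformly in $t\le T$. So $Z^\varepsilon$ has continuous paths in every $\rH^k$, via the same BDG and Kolmogorov argument or via $e^{-\varepsilon A}Z$ with $Z$ continuous in a negative space. The homogeneous Neumann conditions hold because the range of $e^{-\varepsilon A}$ lies in $D(A^\infty)$.

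\textbf{Uniform bounds.} I will check that the kernel bounds from the first step of \autoref{prop-wind-path-regularity} hold for $K_{t+\varepsilon}$ with constants independent of $\varepsilon\in(0,1]$. These are the norms of $K_{t+\varepsilon}$, $(-z)\dz K_{t+\varepsilon}$ and their time derivatives, with powers $t^{-5/12}$ and $t^{-1/4-s/2}$. This is immediate because $(t+\varepsilon)^{-a}\le t^{-a}$. For the time derivative, $t\|\dt K_{t+\varepsilon}\|\le (t+\varepsilon)\|\dt K_{t+\varepsilon}\|\le C(t+\varepsilon)^{-a}$. For $t+\varepsilon$ up to $T+1$ the constant $C_{T+1}$ is used. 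The moment and Hölder bounds of \autoref{prop-wind-path-regularity} then transfer verbatim and give uniform path moments.

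\textbf{Convergence.} This is the main step. I will set $D_t^\varepsilon=K_{t+\varepsilon}e^{(t+\varepsilon)\Delta_\H}-K_te^{t\Delta_\H}$, projected by $P$. The difference $Z^\varepsilon-Z$ is the convolution with $D^\varepsilon$ applied to $g_j$. The increment argument splits at lag $\delta$ exactly as before. It gives, for every $r\ge6$, increment moments bounded by $C\delta^{\theta}$ uniformly in $\varepsilon$, where $\theta=1/4-s/2$ or $1/12$. I will also show that the pointwise-in-time moments tend to zero uniformly in $t$: by the BDG estimate these are bounded by $\big(\int_0^T\|D^\varepsilon_\rho g_j\|^2\d\rho\big)^{1/2}$ in each of the three norms. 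The integrand is dominated by $C\rho^{-2a}$, which is integrable since $a<1/2$. For fixed $\rho>0$ the integrand tends to zero because $\rho\mapsto K_\rho e^{\rho\Delta_\H}g_j$ is continuous into these spaces for $\rho>0$. Dominated convergence then gives the limit. Interpolating the two facts yields the sup-norm convergence. I will interpolate the increment bounds against the vanishing pointwise bounds, e.g. by bounding $\EE|X_t-X_s|^r\le \min(C|t-s|^{r\theta},C\eta_\varepsilon)\le C\eta_\varepsilon^{1/2}|t-s|^{r\theta/2}$. Applying the quantitative Kolmogorov theorem with $r$ large then gives $\EE\|Z^\varepsilon-Z\|_{\rC([0,T])}^r\le C\eta_\varepsilon^{1/2}\to0$, using that the difference vanishes at $t=0$. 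Lower moments follow by Hölder's inequality.

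\textbf{Almost sure convergence.} For the almost sure statement along a sequence, $r$-th moment convergence of the $\rC([0,T])$ norms yields convergence in probability. A deterministic subsequence therefore converges almost surely on $[0,T]$. A diagonal argument over $T\in\mathbb N$ makes a single sequence work on all finite intervals. Alternatively, one can choose $\varepsilon_n$ with $\sum_n\EE[\cdot]^r<\infty$ and apply Borel--Cantelli. The most delicate point is the weighted term $(-z)\dz D^\varepsilon$. There I will use the Gaussian representation of $K$ from \autoref{prop-wind-path-regularity}, so that the domination bound $Ct^{-5/12}$ and the pointwise convergence for $\rho>0$ are both explicit.
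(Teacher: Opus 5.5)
Your proposal is correct and follows the paper's route: you shift the kernel time to $t-r+\varepsilon$, get uniform bounds from $(t+\varepsilon)^{-a}\le t^{-a}$, use dominated convergence with the integrable majorant $C\rho^{-2a}$ for convergence at fixed times, upgrade to path norms through uniform H\"older control, and extract the almost surely convergent deterministic sequence by a subsequence or Borel--Cantelli argument. The only difference is cosmetic: you merge the fixed-time and increment bounds via $\min(A,B)\le\sqrt{AB}$ and then apply Kolmogorov, whereas the paper controls the error on a finite time grid and uses the uniform H\"older bound between adjacent grid points; both are valid.
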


\begin{proof}
We use the shifted kernels to prove convergence at fixed times and the
preceding H\"older estimates to obtain convergence of the continuous paths.
The kernel formula shows that $Z^\varepsilon$ is obtained by replacing
each kernel time $t-r$ in \eqref{eq-kernel-convolution} by
$t-r+\varepsilon$. In particular, its weighted derivative is obtained
from $(-z)\dz K_{t-r+\varepsilon}$ directly. Since
$(t+\varepsilon)^{-a}\le t^{-a}$, the proof of
\autoref{prop-wind-path-regularity} gives uniform bounds for a positive
H\"older norm in each of the three spaces.

At every positive kernel time, the shifted kernels converge to the
original kernels, and their squared norms are bounded by an integrable
multiple of $t^{-2a}$, with $a<1/2$. Dominated convergence and the
stochastic integral estimates in \autoref{prop-wind-path-regularity}
therefore give convergence at every fixed time and hence uniformly
on any finite time grid in every finite probability moment. The uniform
H\"older bound controls the error between adjacent grid points, so letting
the mesh tend to zero gives
the stated convergence in continuous path norms. Markov's inequality
and Borel--Cantelli then select a single deterministic subsequence
converging almost surely on all integer horizons. The exponential
factors in the shifted Fourier--cosine series give spatial smoothness
and the homogeneous Neumann conditions. \qedhere
\end{proof}

For a smooth hydrostatically solenoidal Neumann test field $\varphi$,
stochastic Fubini and \eqref{eq-polynomial-boundary} give
\begin{equation}\label{eq-lift-weak}
(Z(t),\varphi)_2+\int_0^t(Z(r),A\varphi)_2\d r
=\sum_{j=1}^N\beta_j(t)\int_{\T^2}g_j\cdot\varphi(\cdot,0)\d x_\H.
\end{equation}
For $Z^\varepsilon$, the right-hand side contains
$e^{-\varepsilon A}\varphi$ in place of $\varphi$. The displayed
identity follows by passing to the limit with
\autoref{prop-wind-regularization} and the convergence
$e^{-\varepsilon A}\varphi\to\varphi$ in $D(A)$.
This gives the weak formulation of \eqref{eq-stochastic-stokes}, which,
after adding the remainder equation, yields the weak formulation of
\eqref{eq-stochastic-pe} subject to \eqref{eq-stochastic-boundary}.
For profiles with nonzero horizontal divergence, the eliminated pressure
may itself be a distribution in time, while the projected identity
\eqref{eq-lift-weak} remains unambiguous.

Having identified the boundary forcing in the Da Prato--Debussche
decomposition, we next estimate the nonlinear terms in the remainder equation.
\section{Local existence and comparison}
\label{sec-wind-local}

We improve the nonlinear estimates of
\cite[Lemmas~5.4 and~5.5]{BHHS-24} in the horizontally stronger
spaces $\rX_0$ and $\rX_1$, where the additional horizontal
derivatives allow the nonlinear terms to be estimated in square-integrable
time and space norms.

For the local construction, we define the time-dependent ground space and
the maximal regularity space by
\begin{equation*}
\mathbb E_0(0,T)=\rL^2(0,T,\rX_0)
\ \text{ and }\
\mathbb E_1(0,T)=\rH^1(0,T,\rX_0)\cap\rL^2(0,T,\rX_1).
\end{equation*}
\begin{prop}[Local existence]
\label{prop-wind-local-hilbert}
Let $1/4<s<1/2$ and let
$Z\in\rC([0,T],P\rH_z^s\rH_\H^3)$.
For every $v_0\in\rX_\gamma$, there is a maximal existence time
$t_+\in(0,T]$ and a unique solution of \eqref{eq-remainder-anisotropic} on
$[0,t_+)$ with initial value $v_0$. For every $0<S<t_+$, it holds that
\begin{equation*}
v\in\rH^1(0,S,\rX_0)\cap\rL^2(0,S,\rX_1)
\cap\rC([0,S],\rX_\gamma).
\end{equation*}
Maximality is understood in this anisotropic maximal regularity
class. If $t_+<T$, it holds that
\begin{equation*}
\|v\|_{\rH^1(0,t_+,\rX_0)}
+\|v\|_{\rL^2(0,t_+,\rX_1)}=\infty.
\end{equation*}
The solution depends continuously on $v_0$ and $Z$ in the indicated
spaces on a common sufficiently short interval. In particular,
arbitrary $v_0\in P\rL_z^2\rH_\H^3$ are admissible.
\end{prop}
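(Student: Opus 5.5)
We treat \eqref{eq-remainder-anisotropic} as a semilinear evolution equation in the pair $(\rX_0,\rX_1)$ and run the standard fixed-point argument for quasilinear/semilinear problems with maximal $\rL^2$ regularity.

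\textbf{Linear theory.} Since $A$ is nonnegative and self-adjoint on $\rL^2_{\bar\sigma}(\Omega)$ and commutes with horizontal derivatives and the vertical cosine scale, $\sA$ is, after a shift, a nonnegative self-adjoint operator on the Hilbert space $\rX_0$. Indeed, in the Fourier--cosine basis it acts diagonally with symbol $|k|^2+(n\pi/h)^2$ on the range of $P$. Hence $\sA$ has maximal $\rL^2$ regularity on every finite interval by the Hilbert space theorem of de Simon. The trace space of $\mathbb E_1(0,T)$ is $\rX_\gamma$, with $\mathbb E_1(0,T)\hookrightarrow\rC([0,T],\rX_\gamma)$. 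For $0<S\le T$ we use the solution operator norm with the constant uniform in $S$, by working with zero initial data in the contraction and extending by the linear flow.

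\textbf{Nonlinear estimates.} The core is to show that
\[
F(v_1+Z,v_2+Z)
\]
maps $\mathbb E_1\times\mathbb E_1$ into $\mathbb E_0$, with a bilinear bound carrying a small factor as $S\to0$. We split $F(v+Z,v+Z)$ into four pieces.
\begin{enumerate}
\item For $F(v,v)$, we use the conservative form $B(u,v)=\partial_j(u_jv)+\dz(w(u)v)$. Then $\rH_z^{-1}$ absorbs the vertical derivative, and the horizontal derivative costs one of the two extra horizontal orders of $\rX_0$. This reduces the term to products in $\rL_z^2\rH_\H^{3}$-type norms. These are controlled by interpolation between $\rL^\infty_t\rX_\gamma$ and $\rL^2_t\rX_1$, using that $\rH_\H^{2+}$ is an algebra and $\rH_z^{1/2+}\hookrightarrow\rL_z^\infty$. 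The time integrability then gives a factor $S^{\theta}$ or a factor $\|v\|_{\mathbb E_1(0,S)}$, which is small for short $S$.
\item For the mixed terms $F(v,Z)$ and $F(Z,v)$, we pair $\rH_z^s$ regularity of $Z$ with $\rH_z^{1-s'}$ regularity of $v$. This is where ``opposite vertical orders'' enter: products $\rH_z^{s}\cdot\rH_z^{1-\epsilon}\hookrightarrow\rH_z^{s-\epsilon'}$, and $\rH_z^{-1}$ absorbs the $\dz$. The term $w(Z)\dz v=\dz(w(Z)v)-\divH Z\,v$ needs $\divH Z\in\rH_z^s\rH_\H^2$, which is available.
\item For $F(Z,Z)$, we write $\dz(w(Z)Z)$. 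Here $w(Z)\in \rL^\infty_z$ and $Z\in\rH^s_z\subset\rL^2_z$, and the product lies in $\rL_z^2\rH_\H^2\subset\rH_z^{-1}\rH_\H^3$ after the derivative. The condition $s>1/4$ is used so that $\rH_z^s\cdot\rH_z^s\hookrightarrow\rL_z^2$, via $\rL^4$ embedding. This gives a forcing in $\rL^2(0,S,\rX_0)$ with norm $\lesssim S^{1/2}\|Z\|_{\rC\rH_z^s\rH_\H^3}^2$.
\end{enumerate}

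\textbf{Fixed point, maximality and dependence.} Banach's fixed point in a ball of $\mathbb E_1(0,S)$ around the linear solution with data $v_0$, for $S$ small depending on $v_0$ and $\|Z\|$, gives local existence and uniqueness. Continuous dependence on $(v_0,Z)$ follows from the contraction estimates, which are Lipschitz in both arguments. Gluing solutions gives the maximal time $t_+$. If $t_+<T$ with finite $\mathbb E_1(0,t_+)$ norm, then $v(t)$ converges in $\rX_\gamma$ as $t\to t_+$, and restarting would extend the solution, which yields the blow-up criterion. Finally, the admissibility of arbitrary $v_0\in P\rL_z^2\rH_\H^3$ follows from $P\rL^2_z\rH^3_\H\hookrightarrow\rX_\gamma$.

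\textbf{Main obstacle.} The step we expect to be hardest is the mixed and $Z$-$Z$ terms. The product estimates with negative vertical order and low vertical regularity of $Z$ must close in $\rH_z^{-1}$ without a trace of $\dz Z$. We would first try the duality formulation, testing against $\rH_z^1$ functions, to verify these.
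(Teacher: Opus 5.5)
There is a genuine gap in the nonlinear estimates, which is where this proposition is delicate. In item (i) you estimate $P\partial_j(v_jv)$ in $\rX_0=P\rH_z^{-1}\rH_\H^2$ by placing $v_jv$ in $\rL_z^2\rH_\H^3$ and using $\rH_z^{1/2+}\hookrightarrow\rL_z^\infty$. This discards the negative vertical order of the ground space, and the resulting bound is not compatible with $\mathbb E_1$. Since $\rX_\beta\hookrightarrow\rH_z^a\rH_\H^{2+b}$ requires $a+1+b\le2\beta$, the factor in $\rL_z^2\rH_\H^3$ costs $\beta=1$. The $\rL_z^\infty$ factor costs $\beta>3/4$, because $\rX_\gamma\not\hookrightarrow\rL_z^\infty$. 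Moreover $\rX_1\not\hookrightarrow\rL_z^\infty\rH_\H^3$, so the best these tools give is $\|F_\H(v,v)\|_{\rX_0}\lesssim\|v\|_{\rX_1}\|v\|_{\rX_{3/4+\epsilon}}$.

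Since $\mathbb E_1(0,S)\hookrightarrow\rL^p(0,S;\rX_\beta)$ only for $1/p\ge\beta-1/2$, a bilinear bound $\rX_{\beta_1}\times\rX_{\beta_2}\to\rX_0$ is square integrable in time only if $\beta_1+\beta_2\le3/2$. Your bound has sum $7/4+\epsilon$ and would need $v\in\rL^\infty(0,S;\rX_{3/4+\epsilon})$, which $\mathbb E_1$ does not provide. So neither a factor $S^\theta$ nor a small factor $\|v\|_{\mathbb E_1(0,S)}$ appears. Using $\rL_z^1\hookrightarrow\rH_z^{-1}$ instead only reaches the endpoint $\rX_1\times\rX_\gamma$. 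There the Lipschitz constant $\|v\|_{\rL^\infty(0,S;\rX_\gamma)}\approx\|v_0\|_{\rX_\gamma}$ is not small for large data.

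Item (ii) has the same defect. The pairing $\rH_z^s\cdot\rH_z^{1-\epsilon}$ is not an opposite-order pairing. With nonnegative vertical order on $\nablaH v$, the term $Z\cdot\nablaH v$ forces $v\in\rL_z^2\rH_\H^3$, i.e.\ $\beta=1$. The linear perturbation $F(Z,\cdot)$ then has size $CN_Z$ relative to $\rX_1$, with no small factor.

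What is missing is duality in the vertical variable.

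\emph{Horizontal part.} For $0<a<1/2$ the multiplier bound $\|f\phi\|_{\rH_z^a}\le C\|f\|_{\rH_z^a}\|\phi\|_{\rH_z^1}$ dualizes to $\|fg\|_{\rH_z^{-1}}\le C\|f\|_{\rH_z^a}\|g\|_{\rH_z^{-a}}$. With $a=1/4$ and $\rH_\H^2$ values, this yields $\|F_\H(u,v)\|_{\rX_0}\lesssim\|u\|_{\rX_{5/8}}\|v\|_{\rX_{7/8}}+\|v\|_{\rX_{5/8}}\|u\|_{\rX_{7/8}}$.

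\emph{Vertical part.} Moving the vertical primitive onto the test function gives $\|w(u)\|_{\rL_z^2\rH_\H^2}\lesssim\|u\|_{\rH_z^{-1}\rH_\H^3}$. Interpolating with $\|w(u)\|_{\rH_z^1\rH_\H^2}\lesssim\|u\|_{\rL_z^2\rH_\H^3}$ gives $\|w(u)\|_{\rH_z^{1/4}\rH_\H^2}\lesssim\|u\|_{\rH_z^{-3/4}\rH_\H^3}$. Hence $\|F_z(u,v)\|_{\rX_0}\lesssim\|u\|_{\rX_{5/8}}\|v\|_{\rX_{5/8}}$.

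\emph{Mixed terms.} The same estimates with $Z\in\rH_z^s\rH_\H^3$, $s>1/4$, bound the mixed terms by $N_Z\|v\|_{\rX_{7/8}}$.

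\emph{Closing the argument.} Since $5/8+7/8=3/2$, the horizontal estimate is critical, and no power of $S$ is available for $F(v,v)$ with large $v_0$. The paper therefore invokes the Pr\"uss--Wilke theorem with $p=2$, $\beta=7/8$ and $(\rho_j,\beta_j)=(1/3,7/8),(1,5/8)$, for which (H3) holds with equality. In your contraction scheme, the corresponding device is to write $v=e^{-t\sA}v_0+\tilde v$ with $\tilde v(0)=0$. For $\tilde v$ you then use the $S$-uniform embedding $\mathbb E_1(0,S)\hookrightarrow\rL^{8/3}(0,S;\rX_{7/8})\cap\rL^8(0,S;\rX_{5/8})$. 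For $e^{-t\sA}v_0$ you use that these norms vanish as $S\to0$. The $Z$ terms then carry factors $S^{1/8}$ and $S^{1/2}$. Your linear theory via de Simon, the gluing and the blow-up criterion are fine as they stand.
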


\begin{proof}
The proof is divided into three steps. We first establish the spatial
estimates, then apply critical maximal regularity theory.
\begin{step}[The linear theory and spatial estimates]
The operator $\sA$ on $\rX_0$ with domain $\rX_1=D(\sA)$ has maximal $\rL^2$ regularity by
\cite[Proposition~3.4]{BHHS-24}, applied with vertical order $-1$
and horizontal order $2$. The same proposition identifies its trace
space as $(\rX_0,\rX_1)_{1/2,2}=\rX_\gamma$.
For use in the nonlinear estimates, we define the fractional spaces by
$\rX_\beta=D((1+\sA)^\beta)$ for $0<\beta<1$.
The characterization in~\cite[Proposition~3.4]{BHHS-24} gives, with
equivalent norms, that
\begin{equation*}
\rX_\beta=[\rX_0,\rX_1]_\beta
=P\big(\rH_z^{2\beta-1}\rH_\H^2
\cap\rH_z^{-1}\rH_\H^{2+2\beta}\big).
\end{equation*}
Complex interpolation between the two anisotropic Sobolev spaces
in this intersection, followed by the Sobolev embeddings, yields
\begin{equation*}
\rX_\beta\hookrightarrow\rH_z^a\rH_\H^{2+b},
\quad a\ge-1,\quad b\ge0
\ \text{ and } \ a+1+b\le2\beta.
\end{equation*}

For smooth fields, the divergence constraint rewrites the
nonlinearity as $F=F_\H+F_z$, where we define its two parts by
\begin{equation*}
F_\H(u,v)=P\sum_{j=1}^2\partial_{x_j}(u_jv)
\ \text{ and } \ F_z(u,v)=P\dz(w(u)v).
\end{equation*}
First,  using
Cauchy--Schwarz and the identity $\dz w(u)=-\divH u$ yield
\begin{equation*}
\|w(u)\|_{\rH_z^1\rH_\H^2}
\le C\|\divH u\|_{\rL_z^2\rH_\H^2}
\le C\|u\|_{\rL_z^2\rH_\H^3}.
\end{equation*}
To obtain the corresponding estimate at negative vertical order,
we use duality. For every smooth scalar test function $\varphi$,
one has
\begin{equation*}
\begin{aligned}
|\langle w(u),\varphi\rangle|
=|\langle\divH u,
\int_{\cdot}^0\varphi(\zeta)\d\zeta\rangle|
\le \|\divH u\|_{\rH_z^{-1}\rH_\H^2}
\|\int_{\cdot}^0\varphi(\zeta)\d\zeta\|_{\rH_z^1\rH_\H^{-2}}
\le C\|u\|_{\rH_z^{-1}\rH_\H^3}
\|\varphi\|_{\rL_z^2\rH_\H^{-2}}.
\end{aligned}
\end{equation*}
Complex interpolation of these two bounds with parameter $1/4$
therefore gives
\begin{equation*}
\|w(u)\|_{\rH_z^{1/4}\rH_\H^2}
\le C\|u\|_{\rH_z^{-3/4}\rH_\H^3}.
\end{equation*}
By density, this estimate extends the vertical primitive to the
stated negative-order space.
The embeddings $\rH_z^{1/4}\hookrightarrow\rL_z^4$ and the
horizontal algebra property give
\begin{equation}
\label{eq-wind-bilinear-vertical}
\|F_z(u,v)\|_{\rX_0}
\le C\|u\|_{\rH_z^{-3/4}\rH_\H^3}
\|v\|_{\rH_z^{1/4}\rH_\H^2}
\le C\|u\|_{\rX_{5/8}}\|v\|_{\rX_{5/8}}.
\end{equation}
For the horizontal part, we use the standard one-dimensional
multiplication estimate. For $0<a<1/2$, the bounded-domain
multiplication theorem \cite[Theorem~7.4]{BehzadanHolst-21} gives
$\|f\phi\|_{\rH_z^a}\le C\|f\|_{\rH_z^a}\|\phi\|_{\rH_z^1}$,
and duality therefore yields
\begin{equation}
\label{eq-wind-opposite-orders}
\|fg\|_{\rH_z^{-1}}\le C\|f\|_{\rH_z^a}\|g\|_{\rH_z^{-a}}.
\end{equation}
To apply \eqref{eq-wind-opposite-orders}
with horizontal Sobolev values, we use the bounded multiplication
$\rH_\H^2\times\rH_\H^{-2}\longrightarrow\rH_\H^{-2}$.
The one-dimensional embedding $\rH_z^1\hookrightarrow\rL_z^\infty$
and the product rule give the corresponding multiplier bounds at
vertical orders $0$ and $1$. Interpolation at order $a$ and the same
duality argument yield
$\|fg\|_{\rH_z^{-1}\rH_\H^2}
\le C\|f\|_{\rH_z^a\rH_\H^2}\|g\|_{\rH_z^{-a}\rH_\H^2}$.
Expanding the horizontal derivative and taking $a=1/4$, we obtain
\begin{equation}
\label{eq-wind-bilinear-horizontal}
\begin{aligned}
\|F_\H(u,v)\|_{\rX_0}
&\le C\|u\|_{\rH_z^{1/4}\rH_\H^2}
\|v\|_{\rH_z^{-1/4}\rH_\H^3}
+C\|v\|_{\rH_z^{1/4}\rH_\H^2}
\|u\|_{\rH_z^{-1/4}\rH_\H^3}\\
&\le C\|u\|_{\rX_{5/8}}\|v\|_{\rX_{7/8}}
+C\|v\|_{\rX_{5/8}}\|u\|_{\rX_{7/8}}.
\end{aligned}
\end{equation}
To include the prescribed path, we define $N_Z$ by
$N_Z=\sup_{t\le T}\|Z(t)\|_{\rH_z^s\rH_\H^3}$ and apply the same
estimates with one or both arguments replaced by $Z$, which yields
\begin{equation*}
\|F(v,Z)+F(Z,v)\|_{\rX_0}
\le CN_Z(\|v\|_{\rX_{5/8}}+\|v\|_{\rX_{7/8}})
\ \text{ and } \ \|F(Z,Z)\|_{\rX_0}\le CN_Z^2.
\end{equation*}
Only the norms of $Z$ in $\rH_z^{1/4}\rH_\H^2$,
$\rH_z^{-1/4}\rH_\H^3$, and $\rH_z^{-3/4}\rH_\H^3$ occur here,
so all three are controlled by $N_Z$ without requiring
$Z\in\rX_{7/8}$.
\end{step}

\begin{step}[Application of the critical maximal regularity theory]
We apply the framework of~\cite[Theorem~1.2]{PW17} with time
integrability $p=2$, weight $\mu=1$, and $\beta=7/8$.
The linear theory established above gives maximal $\rL^2$ regularity.
Moreover, $\rX_0$ is a Hilbert space and $1+\sA$ has a bounded
$H^\infty$-calculus, so the structural condition~(S) follows
from~\cite[Remark~1.1(i)]{PW17}.

Since $\rX_\gamma=\rX_{1/2}$ with equivalent norms, interpolation gives
\begin{equation*}
\|u\|_{\rX_{5/8}}
\le C\|u\|_{\rX_\gamma}^{2/3}
\|u\|_{\rX_{7/8}}^{1/3}.
\end{equation*}
Consequently, for
$\|u\|_{\rX_\gamma}+\|v\|_{\rX_\gamma}\le R$,
bilinearity and \eqref{eq-wind-bilinear-vertical} together with
\eqref{eq-wind-bilinear-horizontal} yield
\begin{equation*}
\begin{aligned}
\|F(u,u)-F(v,v)\|_{\rX_0}
\le C_R\big[
\big(1+\|u\|_{\rX_{7/8}}^{1/3}
+\|v\|_{\rX_{7/8}}^{1/3}\big)
\|u-v\|_{\rX_{7/8}}+\big(1+\|u\|_{\rX_{7/8}}
+\|v\|_{\rX_{7/8}}\big)
\|u-v\|_{\rX_{5/8}}\big].
\end{aligned}
\end{equation*}
Thus hypothesis~(H2) holds with
$
(\rho_1,\beta_1)=(1/3,7/8)
$ \text{ and } $
(\rho_2,\beta_2)=(1,5/8)$
For our choices of $p$ and $\mu$, hypothesis~(H3) requires
$\rho_j(\beta-1/2)+\beta_j-1/2\le1/2$.
Both pairs satisfy this condition with equality, since it holds that
\begin{equation*}
\frac13\bigg(\frac78-\frac12\bigg)+\frac78-\frac12=\frac12
\ \text{ and } \
\bigg(\frac78-\frac12\bigg)+\frac58-\frac12=\frac12.
\end{equation*}
The operator is independent of the unknown, so hypothesis~(H1)
is immediate for the autonomous equation with nonlinearity $-F(v,v)$.

To include the prescribed path $Z$, the preceding spatial estimates give
\begin{equation*}
\|F(u-v,Z(t))+F(Z(t),u-v)\|_{\rX_0}
\le CN_Z\|u-v\|_{\rX_{7/8}}
\ \text{ and } \
F(Z,Z)\in\rC([0,T],\rX_0).
\end{equation*}
The mixed terms therefore correspond to $\rho=0$ and $\beta_j=7/8$,
which satisfy~(H3) strictly.
Although~\cite[Theorem~1.2]{PW17} is stated for autonomous equations,
its proof applies to these additional time-dependent terms, as
in~\cite[Section~5.3]{BHHS-24}. Indeed, for
$0<T_0\le\min\{1,T\}$ and
$u\in\mathbb E_1(0,T_0)$ with $u(0)=0$, the mixed derivative
embedding into $\rL^{8/3}(0,T_0,\rX_{7/8})$ and H\"older's
inequality give
\begin{equation*}
\begin{aligned}
\|F(u,Z)+F(Z,u)\|_{\mathbb E_0(0,T_0)}
&\le CN_ZT_0^{1/8}\|u\|_{\mathbb E_1(0,T_0)},\\
\|F(Z,Z)\|_{\mathbb E_0(0,T_0)}
&\le CN_Z^2T_0^{1/2}.
\end{aligned}
\end{equation*}
The embedding constant is uniform for zero initial trace.
These bounds supply the required smallness on short intervals
in the cited proof, without differentiating $Z$ in time.

The maximal regularity theorem therefore yields a unique
local solution of \eqref{eq-remainder-anisotropic} in
$\mathbb E_1(0,T_0)\cap\rC([0,T_0],\rX_\gamma)$
for every $v_0\in\rX_\gamma$, together with continuous dependence
on the initial value. Applying the same estimates to differences
of prescribed paths gives continuous dependence on $Z$.
\qedhere
\end{step}
\end{proof}

We now fix $T_0\in(0,t_+)$ and compare the local solution on
$[0,T_0]$ with solutions in the classical strong class. The term
$w(u)\dz Z$ is estimated using the weighted derivative from
\autoref{prop-wind-path-regularity} and the quotient $w(u)/(-z)$.
For this purpose, we define the Hardy average $\mathcal H$ and
write the weighted derivative as $R$ by setting
\begin{equation*}
\mathcal H f(z)=\frac1{-z}\int_0^z f(\zeta)\d\zeta
\ \text{ and } \ R=(-z)\dz Z.
\end{equation*}
For $1<\ell<\infty$ and every Banach space $\rE$, the identity
$\mathcal H f(z)=-\int_0^1f(\theta z)\d\theta$ and
Minkowski's inequality yield
\begin{equation}
\label{eq-wind-hardy}
\|\mathcal H f\|_{\rL_z^\ell(\rE)}
\le\int_0^1\theta^{-1/\ell}\d\theta\,
\|f\|_{\rL_z^\ell(\rE)}
=\frac\ell{\ell-1}\|f\|_{\rL_z^\ell(\rE)},
\end{equation}
In particular,
$w(u)/(-z)=-\mathcal H(\divH u)$ for every hydrostatically solenoidal
field. All subsequent local assertions assume
$Z,R\in\rC([0,T],\rL_z^6\rW_\H^{1,\infty})$, as provided by
\autoref{prop-wind-path-regularity}. To record their size, we define $K$ by
\begin{equation*}
K(t)=\|Z(t)\|_{\rL_z^6\rW_\H^{1,\infty}}
+\|R(t)\|_{\rL_z^6\rW_\H^{1,\infty}}.
\end{equation*}

\begin{lem}[Energy and comparison]
\label{lem-wind-relative-energy}
The solution of \autoref{prop-wind-local-hilbert} satisfies
\begin{equation*}
v\in\rC([0,T_0],\rL^2)\cap\rL^2(0,T_0,\rH^1)
\ \text{ and } \ \dt v\in\rL^2(0,T_0,(\rH^1)').
\end{equation*}
It obeys the remainder energy equality and the estimate
\begin{equation*}
\frac{\d}{\d t}\|v\|_2^2+\|\nabla v\|_2^2
\le C(1+K^4)\|v\|_2^2+CK^4.
\end{equation*}
The same energy equality and estimate hold on $[0,T]$ for every
solution $v$ of \eqref{eq-remainder} satisfying
$v\in\rH^1(0,T,\rL_{\bar\sigma}^2(\Omega))
\cap\rL^2(0,T,D(A))$.
For the same prescribed path $Z$, any two such solutions with
the same initial value coincide on $[0,T]$.
This uniqueness statement also holds when either or both solutions
belong to the local maximal regularity class of
\autoref{prop-wind-local-hilbert} on $(0,T)$.
The same conclusions apply after translating the initial time.
\end{lem}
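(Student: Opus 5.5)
Three things have to be shown: that the anisotropic solution has the stated $\rL^2$–$\rH^1$ regularity, that the remainder energy inequality holds, and that solutions with the same initial value and path coincide. I would first place the anisotropic solution in the energy class. The inclusions $\rX_1\hookrightarrow\rH_z^1\rH_\H^2\hookrightarrow\rH^1$ and $\rX_0=P\rH_z^{-1}\rH_\H^2\hookrightarrow(\rH^1)'$ give $v\in\rL^2(0,T_0,\rH^1)$ and $\dt v\in\rL^2(0,T_0,(\rH^1)')$ directly. The Lions--Magenes lemma on the Gelfand triple $\rH^1\subset\rL^2\subset(\rH^1)'$ then yields $v\in\rC([0,T_0],\rL^2)$ and the identity
\begin{equation*}
\frac{\d}{\d t}\|v\|_2^2=2\langle\dt v,v\rangle .
\end{equation*}
Here the pairing $\langle A_{-1}v,v\rangle=\|\nabla v\|_2^2$ extends by density from $D(A)$. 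To pair the nonlinearity with $v$ I need $F(v+Z,v+Z)\in\rL^2(0,T_0,(\rH^1)')$. On the anisotropic side this comes from $\rX_0\hookrightarrow(\rH^1)'$ together with the bilinear bounds in \autoref{prop-wind-local-hilbert}. For strong solutions it is immediate from $\rL^2_tD(A)$.

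Next I would derive the energy estimate, first with $Z$ replaced by $Z^\varepsilon$ from \autoref{prop-wind-regularization} so that every integration by parts is legitimate, and then pass to the limit. Expanding $F(v+Z,v+Z)$ and testing with $v$ gives four groups of terms.
\begin{enumerate}
\item The pure term $(B(v,v),v)$ vanishes by incompressibility, since $w(v)$ vanishes at both walls.
\item The term $(B(Z,v),v)$ vanishes in the same way once $w(Z)$ has zero boundary values. If it does not, I write it as $-(Z\otimes v,\nablaH v)-(w(Z)v,\dz v)$ and use $w(Z)\in\rL^\infty_z\rW_\H^{1,\infty}$, which follows from $\divH Z\in\rL^6_z\rL^\infty_\H$.
\item The term $(B(v,Z),v)$ splits into $(v\cdot\nablaH Z,v)$, bounded by $\|\nablaH Z\|_{\rL^6_z\rL^\infty_\H}\|v\|_{\rL^{12/5}_z\rL^2_\H}^2$, and the singular part $(w(v)\dz Z,v)$. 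For the latter I use the factorization $w(v)\dz Z=\frac{w(v)}{-z}R$ with $R=(-z)\dz Z$. By \eqref{eq-wind-hardy} this is bounded by
\begin{equation*}
\|\mathcal H\divH v\|_{\rL^2}\,\|R\|_{\rL^6_z\rL^\infty_\H}\,\|v\|_{\rL^3_z\rL^2_\H}\le CK\|\nabla v\|_2\|v\|_2^{1/2}\|v\|_{\rH^1}^{1/2}.
\end{equation*}
Young's inequality then gives $\frac18\|\nabla v\|^2+CK^4\|v\|^2$.
\item The forcing term $(B(Z,Z),v)$ is handled in conservative form, moving the derivative onto $v$. This gives $\|Z\|_{\rL^4}^2\|\nabla v\|_2$, with the vertical piece $w(Z)Z$ bounded by $K^2$, and Young's inequality produces $CK^4$ plus an absorbable term.
\end{enumerate}
Gagliardo--Nirenberg in $z$ handles the intermediate $\rL^p_z$ norms.

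For uniqueness I take the difference $d=v_1-v_2$, which satisfies the same Lions--Magenes framework. Its equation has the convective terms
\begin{equation*}
B(v_1+Z,d)+B(d,v_2+Z).
\end{equation*}
The $Z$-terms are estimated exactly as above. The term $(B(d,v_2),d)$ is the classical Cao--Titi type difference estimate, which closes via Grönwall when $v_2\in\rL^2_t\rH^2$ or lies in $\rL^2(0,T,\rX_1)$. In the anisotropic case I would rely on $\rX_1\hookrightarrow\rL^2_z\rW_\H^{1,\infty}$-type bounds: with $v_2\in\rL^2_t\rH_z^1\rH_\H^2$, the product $w(d)\dz v_2$ is controlled by $\|\divH d\|_2\|\dz v_2\|_{\rL^2_z\rL^\infty_\H}\|d\|_{\rL^\infty_z\rL^2_\H}$. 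Translating the initial time is immediate.

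The main obstacle I expect is justifying these pairings at the anisotropic regularity level, in particular near $t=0$, where $v$ is only in $\rX_\gamma$. The key point there is that $\rX_\gamma\hookrightarrow\rL^2$ supplies the continuity. The second difficulty is the mixed case of uniqueness, where the $w(d)\dz v_2$ bound must use only the anisotropic norms. If needed I would swap roles so that the smoother solution carries the derivative.
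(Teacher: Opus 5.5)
Your regularity argument and energy estimate follow the paper's route. The ingredients are the same: the embeddings $\rX_1\hookrightarrow\rH^1$ and $\rX_0\hookrightarrow(\rH^1)'$ with the Hilbert-triple chain rule, skew-symmetry of transport by $(Z,w(Z))$ justified through $Z^\varepsilon$, and the Hardy factorization $w(v)\dz Z=\frac{w(v)}{-z}R$ with one-dimensional interpolation and Young's inequality. Treating $B(Z,Z)$ in conservative form instead of through $\|F(Z,Z)\|_2\le CK^2$ is a harmless variant. Your fallback in (ii) is not needed. Since $PZ=Z$ gives $\divH\overline Z=0$, $w(Z)$ vanishes at both walls. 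Moreover, $\divH Z\in\rL^6_z\rL^\infty_\H$ would only give $w(Z)\in\rL^\infty$, not $\rL^\infty_z\rW^{1,\infty}_\H$.

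\textbf{The gap} is the term $\langle w(d)\dz v_2,d\rangle$ when $v_2$ lies only in the maximal regularity class, which is exactly the case you flag. There $\rX_\gamma\hookrightarrow\rL^2_z\rH^2_\H$ contains no vertical derivative. Vertical order one is reached only through $\rL^2(0,T,\rX_1)\hookrightarrow\rL^2(0,T,\rH^1_z\rH^2_\H)$, so $b=\|\dz v_2\|_{\rL^2_z\rL^\infty_\H}$ is in general only in $\rL^2(I)$. The classical Cao--Titi bound for this term uses $v_2\in\rL^\infty_t\rH^1\cap\rL^2_t\rH^2$, so it is unavailable. Your replacement $\|\divH d\|_2\,b\,\|d\|_{\rL^\infty_z\rL^2_\H}$ does not close either. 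Using $\|d\|_{\rL^\infty_z\rL^2_\H}\le C\|d\|_2^{1/2}(\|d\|_2+\|\dz d\|_2)^{1/2}$ gives $Cb\|\nabla d\|_2^{3/2}\|d\|_2^{1/2}$. Young's inequality then leaves the Gronwall coefficient $b^4$, which need not be integrable. Swapping roles rescues only the anisotropic--strong case. It does not cover two anisotropic solutions, which the lemma also asserts.

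\textbf{The fix} is to put the supremum in $z$ on $w(d)$ and only $\rL^2_z$ on $d$, so that $d$ enters only through $\|d\|_2\|\nabla d\|_2$. Since $\|w(d)\|_{\rL^\infty_z\rL^2_\H}\le C\|\nablaH d\|_2$, one gets $|\langle w(d)\dz v_2,d\rangle|\le Cb\|\nablaH d\|_2\|d\|_2$. For the horizontal term, take $a=\|\nablaH v_2\|_{\rL^\infty_z\rL^2_\H}$; the horizontal Ladyzhenskaya inequality gives $Ca\|d\|_2(\|d\|_2+\|\nablaH d\|_2)$. Both $a$ and $b$ lie in $\rL^2(I)$ by $\rX_1\hookrightarrow\rH^1_z\rH^2_\H$. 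Young's inequality then leaves the integrable coefficient $C(1+a+a^2+b^2+K^4)$. This is how the paper closes the comparison when either or both solutions are anisotropic.
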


\begin{proof}
The proof is divided into two steps. We first justify the energy
identity and estimate the terms containing $Z$, then apply the same
argument to the difference of two solutions.
\begin{step}[The noise terms and the energy identity]
The one-dimensional Gagliardo--Nirenberg inequality gives
$\|u\|_{\rL_z^3\rL_\H^2}
\le C\|u\|_2^{5/6}(\|u\|_2+\|\dz u\|_2)^{1/6}$.
Together with \eqref{eq-wind-hardy}, this yields
\begin{equation*}
\begin{aligned}
|\langle B(u,Z),u\rangle|
&\le CK\|\nablaH u\|_2\|u\|_2^{5/6}
(\|u\|_2+\|\dz u\|_2)^{1/6}
+CK\|u\|_{\rL_z^3\rL_\H^2}^2,\\
\|F(Z,Z)\|_2
&\le C\big(\|Z\|_{\rL_z^6\rL_\H^\infty}
\|\nablaH Z\|_{\rL_z^6\rL_\H^\infty}
+\|R\|_{\rL_z^6\rL_\H^\infty}
\|\mathcal H(\divH Z)\|_{\rL_z^6\rL_\H^\infty}\big)
\le CK^2.
\end{aligned}
\end{equation*}
The second estimate uses $\rL_z^3\rL_\H^\infty\hookrightarrow
\rL^2(\Omega)$. For every $\eta>0$, Young's inequality consequently yields
\begin{equation}
\label{eq-wind-Z-relative}
|\langle F(u,Z),u\rangle|
\le\eta\|\nabla u\|_2^2+C_\eta(1+K^4)\|u\|_2^2
\ \text{ and } \ \|F(Z,Z)\|_2\le CK^2.
\end{equation}
Transport by $(Z,w(Z))$ is skew symmetric, so
$\langle F(Z,u),u\rangle=0$. This identity follows first for
smooth fields from incompressibility and the zero wall values of
$w(Z)$, and then for $Z$ by \autoref{prop-wind-regularization}, since the
preceding bounds and the Hardy identity justify passage to the limit.

The embeddings $\rX_1\hookrightarrow\rH^1$,
$\rX_\gamma\hookrightarrow\rL^2$, and $\rX_0\hookrightarrow(\rH^1)'$
give the asserted regularity. The Hilbert triple chain rule,
the usual self-advection cancellation, and the preceding noise
cancellation therefore yield
\begin{equation*}
\frac12\frac{\d}{\d t}\|v\|_2^2+\|\nabla v\|_2^2
=-\langle F(v,Z),v\rangle-\langle F(Z,Z),v\rangle.
\end{equation*}
The displayed estimate in the lemma now follows from
\eqref{eq-wind-Z-relative} and Young's inequality. For a local strong
remainder the same Hilbert triple chain rule and cancellations apply,
so this argument also proves its energy equality and estimate.
\end{step}

\begin{step}[Comparison of the regularity classes]
For $U=v_1-v_2$, the same chain rule and cancellations give
\begin{equation*}
\frac12\frac{\d}{\d t}\|U\|_2^2+\|\nabla U\|_2^2
=-\langle F(U,v_2),U\rangle-\langle F(U,Z),U\rangle.
\end{equation*}
If $v_2$ belongs to the class in \autoref{prop-wind-local-hilbert},
we define the two coefficient functions by
\begin{equation*}
a=\|\nablaH v_2\|_{\rL_z^\infty\rL_\H^2}
\ \text{ and } \ b=\|\dz v_2\|_{\rL_z^2\rL_\H^\infty}.
\end{equation*}
Both belong to $\rL^2(I)$. The horizontal Ladyzhenskaya inequality
and $\|w(U)\|_{\rL_z^\infty\rL_\H^2}
\le C\|\nablaH U\|_2$ yield
\begin{equation*}
|\langle F(U,v_2),U\rangle|
\le Ca\|U\|_2(\|U\|_2+\|\nablaH U\|_2)
+Cb\|\nablaH U\|_2\|U\|_2.
\end{equation*}
If $v_2$ is in the strong class, its trace and interpolation instead
give $v_2\in\rC(I,\rH^1)\cap\rL^4(I,\rH^{3/2})$.
In this case we use
$a=\|\nablaH v_2\|_{\rL^3}$ and
$b=\|\dz v_2\|_{\rL_z^2\rL_\H^4}$, which belong to
$\rL^4(I)$. The three-dimensional interpolation inequality for
$\|U\|_3^2$ and the horizontal one for
$\|U\|_{\rL_z^2\rL_\H^4}$ give
\begin{equation*}
|\langle F(U,v_2),U\rangle|
\le Ca\|U\|_2(\|U\|_2+\|\nabla U\|_2)
+Cb\|\nablaH U\|_2\|U\|_2^{1/2}
(\|U\|_2+\|\nablaH U\|_2)^{1/2}.
\end{equation*}
Young's inequality and \eqref{eq-wind-Z-relative} leave an
integrable Gronwall coefficient, bounded respectively by
$C(a+a^2+b^2+1+K^4)$ and
$C(a^2+b^2+b^4+1+K^4)$. Since the initial difference vanishes,
Gronwall's inequality proves the assertion.
\qedhere
\end{step}
\end{proof}

For the continuation from a positive regular time, we use the realization $A$
on $\rL_{\bar\sigma}^2(\Omega)$. For $v$ in the corresponding strong class,
the evolution equation in \eqref{eq-remainder} takes the form
\begin{equation}\label{eq-remainder-strong}
\dt v+Av=-F(v+Z,v+Z).
\end{equation}
Here membership in $D(A)$ imposes the homogeneous Neumann boundary conditions
displayed in \eqref{eq-remainder}.

\begin{prop}[Local strong remainder solutions and continuation]
\label{prop-wind-local-strong}
For every $v_0\in P\rH^1(\Omega)^2$ there is a unique local solution
of \eqref{eq-remainder-strong} with initial value $v_0$ such that
\begin{equation*}
v\in\rH^1(0,T_0,\rL_{\bar\sigma}^2)\cap\rL^2(0,T_0,D(A))
\cap\rC([0,T_0],P\rH^1).
\end{equation*}
Solutions depend continuously on the initial value in $P\rH^1$
and on $(Z,(-z)\dz Z)$ in
$\rC([0,T],\rL_z^6\rW_\H^{1,\infty})^2$.
Convergent approximations exist and converge in these strong solution
norms on every compact interval of existence of the limiting solution.
If the maximal lifetime $T_*<T$ is finite, then it holds that
\begin{equation*}
\sup_{t<T_*}\|v(t)\|_{\rH^1}^2
+\int_0^{T_*}\|v(t)\|_{\rH^2}^2\d t=\infty.
\end{equation*}
\end{prop}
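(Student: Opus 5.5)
We will prove the proposition by a fixed-point argument in the classical strong class, using maximal $\rL^2$ regularity of $A$ on $\rL_{\bar\sigma}^2(\Omega)$. Fix $T_0\le T$ and define
\begin{equation*}
\mathbb F(0,T_0)=\rH^1(0,T_0,\rL_{\bar\sigma}^2)\cap\rL^2(0,T_0,D(A)).
\end{equation*}
Since $A$ is nonnegative and self-adjoint, it has maximal $\rL^2$ regularity with trace space $(\rL_{\bar\sigma}^2,D(A))_{1/2,2}=P\rH^1(\Omega)^2$. The space $\mathbb F(0,T_0)$ therefore embeds into $\rC([0,T_0],P\rH^1)$, uniformly for zero initial trace. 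The solution map sends $u\in\mathbb F$ to the solution of
\begin{equation*}
\dt v+Av=-F(u+Z,u+Z),\quad v(0)=v_0.
\end{equation*}
We decompose the right-hand side into $F(u,u)$, the mixed terms $F(u,Z)+F(Z,u)$, and the forcing $F(Z,Z)$.

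\textbf{Estimates for the three terms.} By \eqref{eq-wind-Z-relative}, the forcing satisfies $\|F(Z,Z)\|_2\le CK^2$. Hence its $\rL^2(0,T_0,\rL^2)$ norm is bounded by $CT_0^{1/2}\sup_t K^2$.

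For the self-interaction we use the standard strong primitive-equations bounds. First,
\begin{equation*}
\|u\cdot\nablaH u\|_2\le C\|u\|_{\rH^1}^{1/2}\|u\|_{\rH^2}^{3/2}.
\end{equation*}
For the vertical part, the anisotropic estimate $\|w(u)\|_{\rL_z^\infty\rL_\H^4}\le C\|\nablaH u\|_{\rL_z^2\rL_\H^4}$ gives
\begin{equation*}
\|w(u)\dz u\|_2\le C\|\nablaH u\|_{\rL_z^2\rL_\H^4}\|\dz u\|_{\rL_z^2\rL_\H^4}\cdots
\end{equation*}
and hence $\|F(u,u)\|_2\le C\|u\|_{\rH^1}^{1/2}\|u\|_{\rH^2}^{3/2}$. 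Integrated in time, this gains a factor $T_0^{1/4}$ through $\rL^\infty_t\rH^1$, with the usual difference estimate.

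The mixed terms are handled as follows. The term $u\cdot\nablaH Z$ is bounded by $\|u\|_{\rL_z^3\rL_\H^\infty}\|\nablaH Z\|_{\rL_z^6\rL_\H^\infty}\le CK\|u\|_{\rH^2}^{1/2}\|u\|_{\rH^1}^{1/2}$, and $Z\cdot\nablaH u$ is bounded by $\|Z\|_{\rL_z^6\rL_\H^\infty}\|\nablaH u\|_{\rL_z^3\rL_\H^2}$. The singular term $w(Z)\dz u$ is bounded by $\|w(Z)\|_\infty\|\dz u\|_2$, since $w(Z)=(-z)\mathcal H(\divH Z)$ lies in $\rL^\infty$ by Hölder in $z$.

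The genuinely new term is $w(u)\dz Z$. Here we use the factorization
\begin{equation*}
w(u)\dz Z=\frac{w(u)}{-z}R=-\mathcal H(\divH u)\,R.
\end{equation*}
Then \eqref{eq-wind-hardy} with $\ell=3$ together with $\rH^1\hookrightarrow\rL^6$ yields
\begin{equation*}
\|w(u)\dz Z\|_2\le \|\mathcal H(\divH u)\|_{\rL_z^3\rL_\H^2}\|R\|_{\rL_z^6\rL_\H^\infty}\le CK\|\divH u\|_{\rL_z^3\rL_\H^2}\le CK\|u\|_{\rH^1}^{1/2}\|u\|_{\rH^2}^{1/2}.
\end{equation*}
Thus all mixed terms are bounded by $CK\|u\|_{\rH^1}^{1/2}\|u\|_{\rH^2}^{1/2}$ in $\rL^2$, and after Hölder in time they gain a factor $T_0^{1/4}$. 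Because $Z$ is not smooth enough for the skew-symmetric cancellation to be meaningful in the $\rL^2$ norm of $F$, these terms are estimated directly and no cancellation is needed.

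\textbf{Contraction, uniqueness and continuous dependence.} For $T_0$ small relative to $\|v_0\|_{\rH^1}$ and $\sup K$, the estimates above give a contraction on a ball in $\mathbb F(0,T_0)$. Uniqueness in the full class also follows from \autoref{lem-wind-relative-energy}. Continuous dependence on $v_0$ and on $(Z,R)$ follows because all estimates are bilinear in $(Z,R)$ and $u$, measured in $\rC([0,T],\rL_z^6\rW_\H^{1,\infty})$. Convergence of approximations on compact subintervals of the limiting existence interval follows by iterating the local step over finitely many subintervals, since the existence time depends only on the $\rH^1$ size of the data and on $\sup K$.

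\textbf{Blow-up criterion.} Suppose $T_*<T$ but $\sup_{t<T_*}\|v\|_{\rH^1}$ and $\int_0^{T_*}\|v\|_{\rH^2}^2$ are both finite. The equation then gives $\dt v\in\rL^2(0,T_*,\rL^2)$, so $v$ extends to $\rC([0,T_*],P\rH^1)$. Restarting from $v(T_*)$ continues the solution beyond $T_*$, contradicting maximality.

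\textbf{Main obstacle.} The main difficulty is the singular term $w(u)\dz Z$. Its control depends entirely on the weighted bound $R\in\rL_z^6$ from \autoref{prop-wind-path-regularity} and on the Hardy factorization, applied with exponents matched so that $\divH u\in\rL_z^3$ uses only half a derivative of $\rH^2$. A secondary technical point is checking that $F(u+Z,u+Z)$ actually lies in $\rL_{\bar\sigma}^2$, which holds because $P$ is bounded on $\rL^2$.
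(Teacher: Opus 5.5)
Your estimates for the mixed terms (including the Hardy factorization of $w(u)\dz Z$), for $F(Z,Z)$, for uniqueness and for the blow-up alternative are fine; the blow-up argument matches the paper's. The gap is the self-interaction term.

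\textbf{The self-interaction is critical.} The display you leave unfinished actually yields, by the horizontal Ladyzhenskaya inequality,
\[
\|w(u)\dz u\|_2\le C\|\nablaH u\|_{\rL_z^2\rL_\H^4}\|\dz u\|_{\rL_z^2\rL_\H^4}\le C\|u\|_{\rH^1}\|u\|_{\rH^2}.
\]
Hence you only get
\[
\|F(u,u)\|_{\rL^2(0,T_0,\rL^2)}\le C\|u\|_{\rL^4(0,T_0,\rH^{3/2})}^2\le C\sup_t\|u(t)\|_{\rH^1}\|u\|_{\rL^2(0,T_0,\rH^2)},
\]
with no power of $T_0$. The exponents $1/2,3/2$ you wrote would even require $\|u\|_{\rH^2}\in\rL^3_t$.

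This bound is sharp, not lossy: $w(u)$ costs a full horizontal derivative, and the vertical integration gains nothing horizontally. Heuristically, take a column $u_1$ concentrated horizontally at scale $1/N$ with a mean-free $O(1)$ vertical profile such as $\cos(\pi z/h)$. It has $\|u_1\|_{\rH^1}\sim1$ but $|w(u_1)|\sim N$ along the whole column. Adding an isotropic bump of scale $1/N$ and unit $\rH^1$ norm inside it gives $\|w(u)\dz u\|_2\sim N\sim\|u\|_{\rH^1}\|u\|_{\rH^2}$.

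So $P\rH^1$ is a critical trace space for this nonlinearity. That is why the paper runs the critical maximal regularity argument of \autoref{prop-wind-local-hilbert} with fractional exponent $3/4$, where (H3) holds with equality. Every element of your ball in $\mathbb F(0,T_0)$ has $\sup_t\|u\|_{\rH^1}\ge\|v_0\|_{\rH^1}$. So the quadratic term is not small for large data, and your contraction does not close. In particular, the genuinely delicate term is $w(u)\dz u$, not $w(u)\dz Z$, which is subcritical.

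\textbf{How to repair it.} Split $u=u^*+\tilde u$, where $u^*$ solves the linear problem with initial value $v_0$.
\begin{enumerate}
\item The reference solution becomes small on short intervals:
\[
\|u^*\|_{\rL^4(0,T_0,\rH^{3/2})}^4\le\sup_t\|u^*\|_{\rH^1}^2\int_0^{T_0}\|u^*\|_{\rH^2}^2\d t\longrightarrow0\quad\text{as }T_0\to0.
\]
\item For $\tilde u(0)=0$, one has $\|\tilde u\|_{\rL^4(0,T_0,\rH^{3/2})}\le C\|\tilde u\|_{\mathbb F(0,T_0)}$ with $C$ independent of $T_0$.
\item Contract on a small ball around $u^*$. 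The mixed and forcing terms are handled by your $T_0^{1/4}$ and $T_0^{1/2}$ bounds.
\end{enumerate}

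The existence time produced this way depends on $v_0$ itself, not only on its norm. Unit-norm data concentrated at frequency $N$ keep $\|e^{-tA}v_0\|_{\rL^4(0,T_0,\rH^{3/2})}$ of order one for $T_0\gg N^{-2}$. So the time is uniform only on compact subsets of $P\rH^1$.

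Consequently, your justification of the approximation statement (that the existence time depends only on the $\rH^1$ size of the data and on $\sup K$) is not supplied by the fixed point. Instead, use compactness of the trajectory $\{v(t)\}$ in $\rH^1$ on a compact interval to obtain finitely many uniform local steps. This is what the paper does in the last step of the proof of \autoref{prop-wind-global-energy}.
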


\begin{proof}
Local existence and continuous dependence follow from the critical
maximal regularity argument in the proof of
\autoref{prop-wind-local-hilbert}, now applied to
$(\rL_{\bar\sigma}^2(\Omega),D(A))$ with trace space
$P\rH^1(\Omega)^2$ and fractional exponent $3/4$.
Uniqueness follows from \autoref{lem-wind-relative-energy}.

To prove the continuation criterion, we use the standard bilinear
estimate
\begin{equation}
\label{eq-wind-strong-bilinear}
\|F(u,v)\|_2
\le C\|u\|_{\rH^{3/2}}\|v\|_{\rH^{3/2}}.
\end{equation}
For the mixed terms, \eqref{eq-wind-hardy} and the
one-dimensional interpolation inequality give
\begin{equation}
\label{eq-wind-strong-cross}
\|F(v,Z)+F(Z,v)\|_2
\le CK\big(\|v\|_{\rH^1}
+\|v\|_{\rH^1}^{5/6}\|v\|_{\rH^2}^{1/6}\big).
\end{equation}
Suppose that $T_*<T$ and that the quantity in the continuation
assertion is finite, so that it holds that
\begin{equation*}
\sup_{t<T_*}\|v(t)\|_{\rH^1}^2
+\int_0^{T_*}\|v(t)\|_{\rH^2}^2\d t<\infty.
\end{equation*}
Interpolation and \eqref{eq-wind-strong-bilinear} then yield
\begin{equation*}
\int_0^{T_*}\|F(v,v)\|_2^2\d t
\le C\sup_{t<T_*}\|v(t)\|_{\rH^1}^2
\int_0^{T_*}\|v(t)\|_{\rH^2}^2\d t<\infty.
\end{equation*}
Since $K$ is bounded on $[0,T]$, Young's inequality and
\eqref{eq-wind-strong-cross} also give
\begin{equation*}
\int_0^{T_*}\|F(v,Z)+F(Z,v)\|_2^2\d t
\le C\sup_{t<T_*}K(t)^2
\int_0^{T_*}\big(\|v(t)\|_{\rH^1}^2
+\|v(t)\|_{\rH^2}^2\big)\d t<\infty.
\end{equation*}
The quadratic forcing belongs to
$\rL^2(0,T_*,\rL_{\bar\sigma}^2(\Omega))$
by \eqref{eq-wind-Z-relative}.
Consequently, the equation gives
$\dt v\in\rL^2(0,T_*,\rL_{\bar\sigma}^2(\Omega))$.
Together with the assumed spatial bound, this places $v$ in the
strong maximal regularity class up to $T_*$.
The trace theorem therefore supplies a limit
$v(T_*)\in P\rH^1(\Omega)^2$.
Applying the local result at this value and using uniqueness
extends the solution beyond $T_*$, contradicting maximality.
\end{proof}
\section{Global a priori estimates}
\label{sec-wind-energy}

Following the global estimates of Cao and
Titi~\cite[Sections 3.1--3.3]{CT-07}, we retain their estimates for
the terms containing only $v$ and estimate the additional terms involving
$Z$ through $(-z)\dz Z$. This avoids requiring an unweighted vertical
derivative of the stochastic convolution.

\begin{prop}[Global a priori estimate]
\label{prop-wind-global-energy}
Let $T>0$ and let $Z$ be the stochastic convolution defined in
\eqref{eq-kernel-convolution}.
We work pathwise on the event of probability one on which the
conclusions of \autoref{prop-wind-path-regularity} and
\autoref{prop-wind-regularization} hold.
To measure the coefficients needed below, we define $M_T$ by
\begin{equation*}
M_T=1+\sup_{0\le t\le T}
\bigl(\|Z(t)\|_{\rL_z^6\rW_\H^{1,\infty}}
+\|(-z)\dz Z(t)\|_{\rL_z^6\rW_\H^{1,\infty}}\bigr).
\end{equation*}
Let $0<S\le T$ and let $v$ be a local strong solution of
\eqref{eq-remainder-strong} on $[0,S)$, with $v(0)=v_0\in\rH^1(\Omega)^2$,
$Pv_0=v_0$, and homogeneous Neumann boundary conditions. Then it holds
that
\begin{equation*}
\sup_{0\le t<S}\|v(t)\|_{\rH^1}^2
+\int_0^S\bigl(\|v\|_{\rH^2}^2+\|\dt v\|_2^2\bigr)\d t
\le C(T,h,M_T,\|v_0\|_{\rH^1}).
\end{equation*}
The same assertion holds with any positive initial time. In particular,
the bound is independent of $S$ and of higher derivatives of smooth
approximations.
\end{prop}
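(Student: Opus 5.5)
We follow the Cao--Titi hierarchy \cite[Sections 3.1--3.3]{CT-07}, applied to the remainder $v$ with the $Z$-terms as perturbations controlled by $M_T$. To justify each test rigorously, we first replace $Z$ by $Z^{\varepsilon}$ from \autoref{prop-wind-regularization}. By \autoref{prop-wind-local-strong} the corresponding strong solutions converge on compact intervals of existence, so it suffices to prove bounds depending only on $T,h,M_T,\|v_0\|_{\rH^1}$. Here $M_T$ is computed with $Z^\varepsilon$ and is uniformly controlled. All constants below depend only on these quantities.

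\textbf{Step 1 ($\rL^2$ energy).} \autoref{lem-wind-relative-energy} gives $\frac{\d}{\d t}\|v\|_2^2+\|\nabla v\|_2^2\le C M_T^4(1+\|v\|_2^2)$. Gronwall then bounds $\sup\|v\|_2^2+\int_0^S\|\nabla v\|_2^2$.

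\textbf{Step 2 (barotropic/baroclinic splitting).} Write $v=\overline v+\tilde v$. The baroclinic part satisfies a transport--diffusion equation in which the pressure has been eliminated and which contains the additional forcing terms $\tilde F(v,Z)$, $\tilde F(Z,v)$ and $\tilde F(Z,Z)$.

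\textbf{Step 3 ($\rL^6$ estimate for $\tilde v$).} We test the equation for $\tilde v$ with $|\tilde v|^4\tilde v$; this is the main obstacle. The pure-$v$ terms are handled as in Cao--Titi, giving
\[
\frac{\d}{\d t}\|\tilde v\|_6^6+\| |\tilde v|^2\nabla\tilde v\|_2^2\le C\|\nabla\overline v\|_2^2\|\tilde v\|_6^6+\dots
\]
The new singular terms are $w(v)\dz Z$ and $w(Z)\dz v$.
\begin{itemize}
\item For $w(v)\dz Z$, factorize $w(v)\dz Z=\frac{w(v)}{-z}R$ and apply the Hardy inequality \eqref{eq-wind-hardy} with $\ell=6$. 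This gives $\|w(v)/(-z)\|_{\rL_z^6}\le\frac65\|\divH v\|_{\rL_z^6}$ in each horizontal point. Integrating by parts horizontally moves $\nablaH$ from $v$ onto $|\tilde v|^4\tilde v$ and onto $R$ (which lies in $\rL_z^6\rW_\H^{1,\infty}$). The resulting terms are of the form $M_T\int|v||R'|\,|\tilde v|^4|\nabla\tilde v|$, and H\"older and Young absorb them into the dissipation $\||\tilde v|^2\nabla\tilde v\|_2^2$, leaving a remainder $CM_T^{2}(\|v\|_6^6+\dots)$.
\item The term $w(Z)\dz v$ needs no such factorization. Its transport part by $(Z,w(Z))$ is skew and cancels. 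Only the part $Z\cdot\nablaH$ acting on $\overline v$ survives, together with the averaged contributions; these are controlled using $Z\in\rL_z^6\rW^{1,\infty}_\H$.
\item $F(Z,Z)$ is bounded by $CM_T^2$.
\end{itemize}
Since $\int\|\nabla\overline v\|_2^2$ is finite by Step 1, Gronwall bounds $\sup\|\tilde v\|_6$.

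\textbf{Step 4 (barotropic $\rH^1$ estimate).} We test the 2D equation for $\overline v$ with $-\Delta_\H\overline v$. The coupling $\overline{\tilde v\cdot\nablaH\tilde v+\divH\tilde v\,\tilde v}$ is bounded through $\int|\tilde v|^2|\nablaH\tilde v|$, using Step 3. The $Z$-terms are bounded by $CM_T^2(\|\nabla v\|_2+1)$, where the Hardy factorization is used again for the averaged $w\dz Z$ contributions. Gronwall, together with the $\rL^2_t\rH^1$ bound, then gives $\overline v\in\rL^\infty_t\rH^1\cap\rL^2_t\rH^2$.

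\textbf{Step 5 ($\dz v$, then full $\rH^1$ and $\rH^2$).} Next we estimate $u=\dz v$, which satisfies homogeneous Dirichlet conditions, by testing with $u$ as in Cao--Titi. The new terms involve $\dz Z$ multiplied by $v$ or $w(v)$. We never integrate $\dz Z$ against an unweighted factor; instead we move the derivative back onto the test function, $\int \dz(\cdot)\,u=-\int(\cdot)\dz u$, so that only $Z$ and $R$ appear. We then conclude with the estimate for $\nablaH v$ by testing with $-\Delta v$, that is, with $Av$. Here \eqref{eq-wind-strong-cross} gives $\|F(v,Z)+F(Z,v)\|_2\le CM_T(\|v\|_{\rH^1}+\|v\|_{\rH^1}^{5/6}\|v\|_{\rH^2}^{1/6})$, which Young absorbs into $\|v\|_{\rH^2}^2$. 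The $v$--$v$ terms are handled with the $\rL^6$ and $\dz v$ bounds as in Cao--Titi. Finally, the $\|\dt v\|_2$ bound is read off from the equation. Translating the initial time gives the same bound from any positive initial time.
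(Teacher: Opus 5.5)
Your proposal follows the paper's proof essentially step for step. The shared steps are: regularizing $Z$ and passing to the limit via \autoref{prop-wind-local-strong}; the Cao--Titi hierarchy with the singular term written as $w(v)\dz Z=(w(v)/(-z))R$, treated by the Hardy inequality and horizontal integration by parts in the baroclinic $\rL^6$ step; and closing the vertical-derivative step by pairing $f_Z(v)$ itself with $\dz^2 v$. Your two deviations are harmless: you bound the averaged noise force directly by Hardy rather than by the conservative mean form, and in the last step you test with $Av$ and use \eqref{eq-wind-strong-cross} rather than the time-integrated bound on $\|f_Z(v)\|_2$.

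Two points need correcting when you write it out:
\begin{enumerate}
\item Since $w(v)=w(\tilde v)$, the leftover in Step~3 should be $C(M_T)\|\tilde v\|_6^6$, with powers of $M_T$ up to four, not $\|v\|_6^6$. As written, $\|v\|_6^6$ would make the Gronwall argument circular, because $\|\overline v\|_6$ is only controlled after Step~4.
\item In Step~5 the bound on $w(v)\dz Z$ needs $\divH v\in\rL_z^3\rL_\H^2$. This costs a factor $\|\nablaH\dz v\|_2^{1/6}$, which must be absorbed into the dissipation by Young's inequality.
\end{enumerate}
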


\begin{proof}
The proof is divided into six steps. We first establish the basic energy
bound, then estimate the baroclinic $\rL^6$ norm, the barotropic
$\rH^1$ norm, and the vertical derivative in turn. The fifth step
controls the horizontal and time derivatives, and the sixth passes from
smooth approximations to the given coefficients.

We work initially on $[0,t]$ with $t<S$, using smooth spatial
approximations and constants uniform in $t<S$. We write $C_M$ for
constants depending only on $h$ and $M_T$, and $C_T$ for constants
depending additionally on $T$ and $\|v_0\|_{\rH^1}$. To separate
the additional terms from the self-interaction, we define $f_Z(v)$ by
\begin{equation*}
f_Z(v)=-B(v,Z)-B(Z,v)-B(Z,Z).
\end{equation*}
The unprojected equation, with a pressure $\pi$ independent of $z$,
therefore reads
\begin{equation}
\label{eq-wind-energy-equation}
\dt v-\Delta v+B(v,v)+\nablaH\pi=f_Z(v).
\end{equation}
All spatial norms without an indicated domain are over $\Omega$.

\begin{step}[The basic energy estimate]
The Hardy inequality \eqref{eq-wind-hardy} and the identity
$w(b)/(-z)=-\mathcal H(\divH b)$ give
\begin{equation*}
\|w(Z)\|_\infty\le C_M
\ \text{ and } \ 
\|B(Z,Z)\|_3\le C_M.
\end{equation*}
For the second bound, both the horizontal product and the vertical
product $(w(Z)/(-z))R$ belong to $\rL_z^3\rL_\H^\infty$ by the
Hardy and H\"older inequalities. Since $K\le M_T$, the remainder
energy estimate from \autoref{lem-wind-relative-energy}, which includes
all mixed noise terms, yields
\begin{equation*}
\dt\|v\|_2^2+\|\nabla v\|_2^2\le C_M(1+\|v\|_2^2).
\end{equation*}
Gronwall's inequality therefore yields
\begin{equation}
\label{eq-wind-basic-energy-bound}
\sup_{0\le\tau\le t}\|v(\tau)\|_2^2
+\int_0^t\|\nabla v\|_2^2\d\tau\le C_T.
\end{equation}
\end{step}

\begin{step}[The baroclinic $\rL^6$ estimate]
\label{step-wind-baroclinic-six}
To estimate the fluctuation about the vertical mean, we define
\begin{equation*}
a=\overline v,\quad r=v-a,\quad s=Z-\overline Z,\quad
Y=\|r\|_6^6
\ \text{ and } \ 
D_6^2=\int_{\Omega}|r|^4|\nabla r|^2\d x.
\end{equation*}
Subtracting the averaged equation from
\eqref{eq-wind-energy-equation} gives
\begin{equation*}
\begin{aligned}
\dt r-\Delta r+a\cdot\nablaH r+r\cdot\nablaH a
+B(r,r)-\overline{B(r,r)}
={}&-a\cdot\nablaH s-s\cdot\nablaH a\\
&-B(r,Z)-B(Z,r)-B(Z,Z)\\
&+\overline{B(r,Z)+B(Z,r)+B(Z,Z)}.
\end{aligned}
\end{equation*}
Testing this pressure-free equation against $|r|^4r$ gives control
of $D_6^2$ through diffusion, while $a\cdot\nablaH r$ and
$B(r,r)$ cancel. For the remaining self-interactions, we apply the
horizontal Ladyzhenskaya inequality to
$(\int_{-h}^0|r|^6\d z)^{1/2}$ and the mean-tensor estimate
of~\cite[Section 3.2]{CT-07}, respectively, to obtain
\begin{equation*}
\begin{aligned}
|\int_{\Omega}(r\cdot\nablaH a)\cdot|r|^4r\d x|
&\le\eta D_6^2+C_\eta
\bigl(\|\nablaH a\|_{\rL^2(\T^2)}+\|\nablaH a\|_{\rL^2(\T^2)}^2\bigr)Y,\\
|\int_{\Omega}\overline{B(r,r)}\cdot|r|^4r\d x|
&\le\eta D_6^2+C_\eta
\bigl(\|\nablaH r\|_2^2+\|r\|_2^2\bigr)Y.
\end{aligned}
\end{equation*}
These estimates use only horizontal Sobolev inequalities and the zero
vertical flux, so the periodic setting introduces no additional term.

For the noise contributions, applying the three-dimensional
Gagliardo--Nirenberg inequalities to $|r|^3$, whose gradient has
$\rL^2$ norm at most $3D_6$, gives
\begin{equation}
\label{eq-wind-six-interpolations}
\|r\|_{36/5}^6\le C(Y+Y^{3/4}D_6^{1/2})
\ \text{ and } \ 
\|r\|_9^3\le C(Y^{1/2}+Y^{1/4}D_6^{1/2}).
\end{equation}
The horizontal part of $B(r,Z)$ therefore satisfies
\begin{equation*}
|\int_{\Omega}(r\cdot\nablaH Z)\cdot|r|^4r\d x|
\le\|\nablaH Z\|_6\|r\|_{36/5}^6
\le\eta D_6^2+C_{M,\eta}Y.
\end{equation*}
For its vertical part, the identity
$w(r)\dz Z=-\divH(\mathcal H r)R$ permits horizontal integration
by parts. H\"older's inequality and \eqref{eq-wind-hardy} give
\begin{equation*}
\begin{aligned}
|\int_{\Omega}w(r)\dz Z\cdot|r|^4r\d x|
&\le C\|\nablaH R\|_6\|\mathcal H r\|_{36/5}\|r\|_{36/5}^5
+C\|R\|_6D_6\|\mathcal H r\|_9\|r\|_9^2\\
&\le C\|\nablaH R\|_6\|r\|_{36/5}^6
+C\|R\|_6D_6\|r\|_9^3\\
&\le\eta D_6^2+C_{M,\eta}Y.
\end{aligned}
\end{equation*}
The last bound follows from \eqref{eq-wind-six-interpolations}, with
highest powers $4/3$ and $4$ of $\|\nablaH R\|_6$ and $\|R\|_6$,
respectively, and requires no unweighted norm of $\dz Z$.

The unaveraged transport $B(Z,r)$ cancels against $|r|^4r$, whereas
both mean corrections remain. Their conservative forms give
\begin{equation*}
\overline{B(r,Z)}_i=\sum_{j=1}^2\partial_{x_j}\overline{r_jZ_i}
\ \text{ and } \ 
\overline{B(Z,r)}_i=\sum_{j=1}^2\partial_{x_j}\overline{Z_jr_i}.
\end{equation*}
After horizontal integration by parts, each pairing contains an
averaged tensor and the average of $\nablaH(|r|^4r)$. Since vertical
averaging is a contraction on $\rL^\ell(\Omega)$ when its output
is extended constantly in $z$, H\"older's inequality with exponents
$18/5$ and $18/13$ gives
\begin{equation*}
|\int_{\Omega}\overline{B(r,Z)}\cdot|r|^4r\d x|
+|\int_{\Omega}\overline{B(Z,r)}\cdot|r|^4r\d x|
\le C\|Z\|_6D_6\|r\|_9^3
\le\eta D_6^2+C_{M,\eta}Y.
\end{equation*}
Indeed, the tensor is bounded by $\|Z\|_6\|r\|_9$ in
$\rL^{18/5}$, whereas
$\||r|^4\nablaH r\|_{18/13}\le D_6\|r\|_9^2$.

For the interactions with $a$, we define the column norm $\varphi$ by
$\varphi(x_\H)=(\int_{-h}^0|r(x_\H,z)|^6\d z)^{1/2}$.
The chain rule gives
\begin{equation*}
\|\varphi\|_{\rL^2(\T^2)}^2=Y
\ \text{ and } \ 
\|\nablaH\varphi\|_{\rL^2(\T^2)}\le3D_6.
\end{equation*}
H\"older's inequality first in $z$ and then in $x_\H$, followed by
the two-dimensional Gagliardo--Nirenberg inequality, consequently gives
\begin{equation*}
\begin{aligned}
|\int_{\Omega}(a\cdot\nablaH s+s\cdot\nablaH a)\cdot|r|^4r\d x|
&\le C_M\|a\|_{\rH^1(\T^2)}\|\varphi\|_{\rL^{10/3}(\T^2)}^{5/3}\\
&\le C_M\|a\|_{\rH^1(\T^2)}
(Y^{1/2}D_6^{2/3}+Y^{5/6})\\
&\le\eta D_6^2+C_{M,\eta}(1+\|a\|_{\rH^1(\T^2)}^2)(1+Y).
\end{aligned}
\end{equation*}
Finally, using the $\rL^3$ bounds for the pure noise and its average
and interpolating $|r|^3$ once more gives
\begin{equation*}
\begin{aligned}
|\int_{\Omega}\bigl(B(Z,Z)-\overline{B(Z,Z)}\bigr)\cdot|r|^4r\d x|
&\le C_M\|r\|_{15/2}^5\\
&\le C_M(Y^{7/12}D_6^{1/2}+Y^{5/6})
\le\eta D_6^2+C_{M,\eta}(1+Y).
\end{aligned}
\end{equation*}
Combining these estimates with small enough $\eta$ yields
\begin{equation*}
\dt Y+D_6^2
\le C_M(1+\|v\|_2^2+\|\nabla v\|_2^2)(1+Y).
\end{equation*}
Since $Y(0)\le C\|v_0\|_{\rH^1}^6$, the previously established
\eqref{eq-wind-basic-energy-bound} and Gronwall's inequality give
\begin{equation}
\label{eq-wind-six-bound}
\sup_{0\le\tau\le t}\|r(\tau)\|_6^6
+\int_0^tD_6^2\d\tau\le C_T.
\end{equation}
\end{step}

\begin{step}[The barotropic $\rH^1$ estimate]
The averaged equation has the same self-interaction terms as
in~\cite[Section 3.3.1]{CT-07}, with the additional force
$\overline{f_Z(v)}$. The conservative mean identities express this
force using only horizontal derivatives. With tensor components
$(b\otimes c)_{ij}=b_i c_j$ and divergence on the second index, they
give
\begin{equation*}
\overline{f_Z(v)}=-\divH\overline{Z\otimes v+v\otimes Z+Z\otimes Z}.
\end{equation*}
Differentiating the products horizontally and applying
Cauchy--Schwarz in $z$ gives
\begin{equation*}
\|\overline{f_Z(v)}\|_{\rL^2(\T^2)}
\le C_M(1+\|v\|_2+\|\nablaH v\|_2).
\end{equation*}
The known self-interaction force satisfies
\begin{equation*}
\|\divH\overline{r\otimes r}\|_{\rL^2(\T^2)}^2
\le C\int_{\Omega}|r|^2|\nablaH r|^2\d x
\le C(D_6^2+\|\nablaH r\|_2^2).
\end{equation*}
Since the self-transport cancels in the $\rH^1$ identity on the
two-dimensional torus, testing the averaged equation against
$-\Delta_\H a$ yields
\begin{equation*}
\dt\|\nablaH a\|_{\rL^2(\T^2)}^2+\|\Delta_\H a\|_{\rL^2(\T^2)}^2
\le C(D_6^2+\|\nablaH r\|_2^2)
+C_M(1+\|v\|_2^2+\|\nablaH v\|_2^2).
\end{equation*}
Using \eqref{eq-wind-basic-energy-bound} and
\eqref{eq-wind-six-bound}, we obtain
\begin{equation*}
\sup_{0\le\tau\le t}\|a(\tau)\|_{\rH^1(\T^2)}^2
+\int_0^t\|\Delta_\H a\|_{\rL^2(\T^2)}^2\d\tau\le C_T.
\end{equation*}
The embedding $\rH^1(\T^2)\hookrightarrow\rL^6(\T^2)$ and
\eqref{eq-wind-six-bound} now give
\begin{equation}
\label{eq-wind-full-six-bound}
\sup_{0\le\tau\le t}\|v(\tau)\|_6\le C_T.
\end{equation}
\end{step}

\begin{step}[The vertical derivative estimate]
To control the noise in the next energy identity, we write
$\zeta=\dz v$, $X=\|\nablaH v\|_2$ and
$M=\|\nablaH\zeta\|_2$. One-dimensional interpolation for
$\rL^2(\T^2)$-valued functions gives
\begin{equation}
\label{eq-wind-mixed-three}
\|g\|_{\rL_z^3\rL_\H^2}
\le C\|g\|_2^{5/6}(\|g\|_2+\|\dz g\|_2)^{1/6}.
\end{equation}
Together with the Hardy inequality, this yields
\begin{equation*}
\begin{aligned}
\|v\cdot\nablaH Z\|_2&\le C_M\|v\|_{\rL_z^3\rL_\H^2},\\
\|Z\cdot\nablaH v\|_2+\|w(v)\dz Z\|_2
&\le C_M\|\nablaH v\|_{\rL_z^3\rL_\H^2}
\ \text{ and } \ 
\|w(Z)\dz v\|_2\le C_M\|\zeta\|_2.
\end{aligned}
\end{equation*}
In particular, including the pure noise term, we obtain
\begin{equation}
\label{eq-wind-force-lower-order}
\|f_Z(v)\|_2
\le C_M\bigl(1+\|v\|_{\rH^1}+X^{5/6}(X+M)^{1/6}\bigr).
\end{equation}
We use this pointwise bound in the vertical energy identity to establish
the time integrability of $f_Z(v)$ below. Testing
\eqref{eq-wind-energy-equation} against $-\dz^2v$ eliminates the
pressure, and the standard self-interaction estimate
of~\cite[Section 3.3.2]{CT-07} gives
\begin{equation*}
|\int_{\Omega}B(v,v)\cdot\dz^2v\d x|
\le\eta\|\nabla\zeta\|_2^2
+C_\eta(\|v\|_6^2+\|v\|_6^4)\|\zeta\|_2^2.
\end{equation*}
For the noise, Young's inequality controls the only nonquadratic
expression from \eqref{eq-wind-force-lower-order} by giving
\begin{equation*}
C_MX^{5/6}\|\nabla\zeta\|_2^{7/6}
\le\eta\|\nabla\zeta\|_2^2+C_{M,\eta}X^2.
\end{equation*}
Thus the forcing pairing satisfies
\begin{equation*}
|\int_{\Omega}f_Z(v)\cdot\dz^2v\d x|
\le\eta\|\nabla\zeta\|_2^2
+C_{M,\eta}(1+\|v\|_2^2+X^2+\|\zeta\|_2^2).
\end{equation*}
After absorption, we obtain
\begin{equation*}
\dt\|\zeta\|_2^2+\|\nabla\zeta\|_2^2
\le C(C_M+\|v\|_6^2+\|v\|_6^4)\|\zeta\|_2^2
+C_M(1+\|v\|_2^2+X^2).
\end{equation*}
Gronwall's inequality, using \eqref{eq-wind-basic-energy-bound} and
\eqref{eq-wind-full-six-bound}, yields
\begin{equation}
\label{eq-wind-vertical-one-bound}
\sup_{0\le\tau\le t}\|\zeta(\tau)\|_2^2
+\int_0^t\|\nabla\zeta\|_2^2\d\tau\le C_T.
\end{equation}
Squaring and integrating \eqref{eq-wind-force-lower-order}, we control
the remaining mixed term by H\"older's inequality, which gives
\begin{equation*}
\int_0^tX^{5/3}M^{1/3}\d\tau
\le\left(\int_0^tX^2\d\tau\right)^{5/6}
\left(\int_0^tM^2\d\tau\right)^{1/6}\le C_T.
\end{equation*}
The basic and vertical energy estimates bound the first and second
factors, respectively, so without using a supremum bound for $X$, we
obtain
\begin{equation}
\label{eq-wind-force-time-two}
\int_0^t\|f_Z(v)\|_2^2\d\tau\le C_T.
\end{equation}
\end{step}

\begin{step}[The horizontal derivative and time derivative estimates]
For the final spatial estimate, we write
$J=\|\zeta\|_2$ and $D_\H=\|\nabla\nablaH v\|_2$.
Testing \eqref{eq-wind-energy-equation} against $-\Delta_\H v$
and using the self-interaction estimates of
\cite[Section 3.3.3]{CT-07} gives
\begin{equation*}
\dt X^2+D_\H^2
\le C\bigl(\|v\|_6^2+\|v\|_6^4+J(J+M)+J^2(J+M)^2\bigr)X^2
+C\|f_Z(v)\|_2^2.
\end{equation*}
Here the mixed norms in the vertical convection estimate are controlled
by the horizontal Ladyzhenskaya inequality, which gives
\begin{equation}
\label{eq-wind-horizontal-mixed-four}
\|w(v)\|_{\rL_z^\infty\rL_\H^4}
\le CX^{1/2}(X+D_\H)^{1/2}
\ \text{ and } \ 
\|\zeta\|_{\rL_z^2\rL_\H^4}
\le CJ^{1/2}(J+M)^{1/2}.
\end{equation}
The coefficient of $X^2$ is integrable by
\eqref{eq-wind-full-six-bound} and \eqref{eq-wind-vertical-one-bound},
and \eqref{eq-wind-force-time-two} controls the additive term.
Gronwall's inequality therefore gives
\begin{equation*}
\sup_{0\le\tau\le t}X(\tau)^2+\int_0^tD_\H^2\d\tau\le C_T.
\end{equation*}
Together with the vertical derivative estimate and the basic energy
bound, this controls every second spatial derivative and yields
\begin{equation*}
\sup_{0\le\tau\le t}\|v(\tau)\|_{\rH^1}^2
+\int_0^t\|v\|_{\rH^2}^2\d\tau\le C_T.
\end{equation*}
For the time derivative, interpolation and
\eqref{eq-wind-horizontal-mixed-four} give
\begin{equation*}
\|(v\cdot\nablaH)v\|_2^2\le C\|v\|_6^2X(X+D_\H)
\ \text{ and } \ 
\|w(v)\zeta\|_2^2\le CX(X+D_\H)J(J+M).
\end{equation*}
Both right-hand sides are integrable by the preceding bounds, with
Cauchy--Schwarz controlling the product $D_\H M$. Applying $P$ to
\eqref{eq-wind-energy-equation} and using
\eqref{eq-wind-force-time-two}, we obtain
\begin{equation*}
\int_0^t\|\dt v\|_2^2\d\tau
\le C\int_0^t\bigl(\|\Delta v\|_2^2+\|B(v,v)\|_2^2
+\|f_Z(v)\|_2^2\bigr)\d\tau\le C_T.
\end{equation*}
\end{step}

\begin{step}[Passage to the given coefficients]
For fixed $t<S$, approximate $v_0$ in $\rH^1$ by smooth
hydrostatically solenoidal Neumann data and approximate $Z$ as
in \autoref{prop-wind-regularization}. Applying the local strong
stability from \autoref{prop-wind-local-strong} successively along
the given solution on $[0,t]$ gives convergence of the approximate
solutions in $\rC\rH^1\cap\rL^2\rH^2$, with time derivatives
converging in $\rL^2\rL^2$. Compactness of the image of the given
solution in $\rH^1$ reduces this argument to finitely many local
intervals, so it uses only local existence and continuous dependence.
For the smooth spatial coefficients, spatial regularization or
truncation of $|r|^4r$ justifies the energy tests above without
differentiating the coefficients in time.

Convergence in both coefficient norms defining $M_T$ makes all
estimates uniform, and weak lower semicontinuity gives the quadratic
bounds. For the baroclinic dissipation, local strong convergence gives
almost-everywhere convergence of $r$ and $\nabla r$ along a
subsequence, identifying the weak $\rL^2$ limit of $|r|^2\nabla r$
and giving the corresponding lower semicontinuity for $D_6^2$.

The conservative mean identities also hold at the limiting regularity
without a trace of $\dz Z$. Since $v\in\rH^2$ at almost every
time, the product rules give
\begin{equation*}
\dz(w(v)Z)=-(\divH v)Z+(w(v)/(-z))R
\ \text{ and } \ 
\dz(w(Z)v)=-(\divH Z)v+w(Z)\dz v.
\end{equation*}
The Hardy inequality and \eqref{eq-wind-mixed-three} place both
products in $\rH_z^1\rL_\H^2$ and their quotients by $(-z)$ in
$\rL^2(\Omega)$. Their top traces therefore vanish, since a
nonzero trace of a continuous $\rL^2(\T^2)$-valued function is
incompatible with square integrability after division by $(-z)$.
At the bottom, $Z$ has an ordinary Sobolev trace because $(-z)$
is bounded away from zero, and $w(v)=w(Z)=0$. The same argument
applies to $w(Z)Z$ in $\rW_z^{1,3}\rL_\H^2$, whose derivative
and quotient by $(-z)$ belong to $\rL_z^3\rL_\H^2$. All vertical
flux terms therefore vanish, and the cancellations and mean identities
also follow by the approximation passage above.
The bounds are uniform for $t<S$, so letting $t$ increase to $S$
proves the assertion. \qedhere
\end{step}
\end{proof}
\section{Proof of the main theorem}
\label{sec-wind-proof}

We combine the local construction in the anisotropic maximal regularity
class with the global a priori estimate from a positive regular time.
The resulting continuation is classical strong at positive times,
without asserting global maximal regularity in the original pair
$(\rX_0,\rX_1)$.

\begin{proof}[Proof of \autoref{thm-global-wind}]
The proof is divided into five steps. We first construct the local
remainder and then continue it in the classical strong class. The
third and fourth steps establish independence of the restart time
and adaptedness, respectively. Finally, we verify the boundary
condition and prove pathwise uniqueness.

\begin{step}[The local construction]
Fix $T>0$ and $1/4<s<1/2$.
By \autoref{prop-wind-path-regularity} and
\autoref{prop-wind-regularization}, the stochastic convolution $Z$
has the required path regularity and spatial approximations almost surely.
Since $V_0\in P\rL_z^2\rH_\H^3\hookrightarrow\rX_\gamma$,
\autoref{prop-wind-local-hilbert} yields a unique maximal local
solution $v$ of \eqref{eq-remainder-anisotropic}, with lifetime
$t_+\in(0,T]$. For every $0<S<t_+$, it holds that
\begin{equation*}
v\in\rH^1(0,S,\rX_0)\cap\rL^2(0,S,\rX_1)
\cap\rC([0,S],\rX_\gamma).
\end{equation*}
The continuous and causal dependence of the local solution on $Z$,
together with localization of its adapted coefficient norms,
gives an adapted solution up to a strictly positive stopping time
$\tau<t_+$.
Setting $V=v+Z$ gives a local solution of
\eqref{eq-stochastic-pe} subject to
\eqref{eq-stochastic-boundary}.
By \autoref{lem-wind-relative-energy}, $v$ also belongs to the
energy class on every $[0,S]$ with $S<t_+$, which permits the
comparison with strong solutions below.
\end{step}
\begin{step}[Continuation in the classical strong class]
The embedding $\rX_1\hookrightarrow\rH^1(\Omega)^2$ gives
$v(a)\in P\rH^1$ for almost every $a\in(0,t_+)$ on each path.
For such an $a$, \autoref{prop-wind-local-strong} provides a local
strong solution $u^a$ with initial value $v(a)$ satisfying
\begin{equation*}
\dt u^a+Au^a=-F(u^a+Z,u^a+Z)
\ \text{ and } \ u^a(a)=v(a).
\end{equation*}
The stochastic convolution is thus unchanged at the restart time.
For every endpoint $S\le T$ within the strong existence interval,
\autoref{prop-wind-global-energy} gives
\begin{equation*}
\sup_{a\le t<S}\|u^a(t)\|_{\rH^1}^2
+\int_a^S\bigl(\|u^a\|_{\rH^2}^2+\|\dt u^a\|_2^2\bigr)\d t
\le C(T,h,M_T,\|v(a)\|_{\rH^1}).
\end{equation*}
Since this bound is independent of $S$, the continuation alternative
in \autoref{prop-wind-local-strong} extends $u^a$ through $T$, and
it holds that
\begin{equation*}
u^a\in\rH^1(a,T,\rL_{\bar\sigma}^2(\Omega))
\cap\rL^2(a,T,D(A))\cap\rC([a,T],P\rH^1(\Omega)^2).
\end{equation*}
For every $a<S<t_+$, the local remainder and $u^a$ have the same
left trace at $a$, so \autoref{lem-wind-relative-energy} gives
$u^a=v$ on $[a,S]$ and hence on $[a,t_+)$. We therefore define
the continuation $\widetilde v^a$ by
\begin{equation*}
\widetilde v^a(t)=
\begin{cases}
v(t),&0\le t\le a,\\
u^a(t),&a<t\le T.
\end{cases}
\end{equation*}
The common trace at $a$ and agreement on $[a,t_+)$ show that
$\widetilde v^a$ solves \eqref{eq-remainder} in $D(A)'$ throughout $(0,T)$
and attains $V_0$ at zero. Here consistency identifies $\sA v$ and $Au^a$
with $A_{-1}v$ and $A_{-1}u^a$, respectively, on their intervals of construction.

\end{step}

\begin{step}[Independence of the restart time]
Let $0<a<b<t_+$ be two regular times and let $u^a,u^b$ be their
strong continuations. Comparison with the original local solution
at time $b$ gives
\begin{equation*}
u^a(b)=v(b)=u^b(b).
\end{equation*}
Both continuations solve \eqref{eq-remainder-strong} with the same path $Z$
on $[b,T]$, so strong uniqueness from
\autoref{lem-wind-relative-energy} gives $u^a=u^b$ there. Before
$a$, both continued functions equal $v$, and between $a$ and $b$
the comparison $u^a=v$ applies, yielding
\begin{equation*}
\widetilde v^a=\widetilde v^b\quad\text{on }[0,T].
\end{equation*}
One regular restart time therefore determines the global remainder
independently of that choice. Since regular times can be chosen below
every fixed $\delta>0$, this remainder belongs to the classical
strong class on every $[\delta,T]$. Its agreement with the original
local solution also preserves the anisotropic maximal regularity on
$[0,\tau]$, while the initial energy estimate and the positive-time
strong bounds give the energy bound on $[0,T]$. We henceforth write
$v$ for the common continuation.
\end{step}

\begin{step}[Adaptedness of the continuation]
To make the restart-time selection measurable, we use the adapted
local solution on $[0,\tau]$. Every closed ball of $P\rH^1$ is
closed in $\rL_{\bar\sigma}^2$, because a sequence bounded in
$\rH^1$ and converging in $\rL^2$ has a weakly convergent
$\rH^1$ subsequence with the same limit. Thus $P\rH^1$ is
Borel in $\rL_{\bar\sigma}^2$, and the almost-everywhere
regularity together with Tonelli's theorem yields
\begin{equation*}
\int_0^T\PP\big(a<\tau,\ v(a)\notin P\rH^1\big)\d a=0.
\end{equation*}
We may therefore choose deterministic $a_n\downarrow0$ such that
$v(a_n)\in P\rH^1$ almost surely on
$E_n=\{a_n<\tau\}\in\sF_{a_n}$.
On $E_n$, the initial value $v(a_n)$ is measurable as an
$\rH^1$-valued variable.

Start the causal strong construction at $a_n$ with this value on
$E_n$ and zero initial value outside $E_n$. The global energy
bounds and successive local contractions give an adapted strong
continuation on $[a_n,T]$, with measurability of the limits following
from the continuous dependence in \autoref{prop-wind-local-strong}.
The continuations agree on overlapping events by the preceding step,
and $\bigcup_n E_n$ has probability one because $\tau>0$.
For fixed $t>0$, choosing the first $n$ with $a_n<t$ and $E_n$
is an $\sF_t$-measurable selection, so these agreeing continuations
define an adapted global process. Their initial and positive-time
continuities identify it with the pathwise continuation already
constructed.
\end{step}

\begin{step}[The boundary condition and uniqueness]
The local trace is attained in $\rX_\gamma$, and
$Z\in\rC([0,T],\rX_\gamma)$ with $Z(0)=0$, by
\autoref{prop-wind-path-regularity}. Thus $V=v+Z$ has initial value
$V_0$. For every smooth hydrostatically solenoidal Neumann test field
$\varphi$, adding \eqref{eq-lift-weak} to the integrated remainder
equation yields
\begin{equation*}
\begin{aligned}
(V(t),\varphi)_2+\int_0^t(V(r),A\varphi)_2\d r
+\int_0^t\langle F(V(r),V(r)),\varphi\rangle\d r
=(V_0,\varphi)_2
+\sum_{j=1}^N\beta_j(t)\int_{\T^2}g_j\cdot\varphi(\cdot,0)\d x_\H.
\end{aligned}
\end{equation*}
The nonlinear term is interpreted through the $\rX_0$ duality
pairing near zero and the representatives estimated above at positive
times, so this identity verifies system \eqref{eq-stochastic-pe}
subject to the boundary conditions \eqref{eq-stochastic-boundary}
in the stated sense. Testing with constant horizontal vectors also gives
\begin{equation*}
\int_{\Omega}V(t)\d x=\int_{\Omega}V_0\d x
+\sum_{j=1}^N\beta_j(t)\int_{\T^2}g_j\d x_\H,
\end{equation*}
which includes the momentum change caused by nonzero-mean wind profiles.

Suppose two solutions of system \eqref{eq-stochastic-pe} subject to
\eqref{eq-stochastic-boundary} have the same initial datum and Brownian
motions, and their remainders have the regularity asserted in
\autoref{thm-global-wind}. They have the same stochastic convolution
$Z$, and each remainder belongs to
$\rH^1(0,\tau_i,\rX_0)\cap\rL^2(0,\tau_i,\rX_1)$ on an
initial interval with $\tau_i>0$. Local uniqueness from
\autoref{prop-wind-local-hilbert} makes them agree on a common initial
interval. From any positive time in that interval, both remainders
are classical strong solutions with the same initial value, so
\autoref{lem-wind-relative-energy} propagates their equality through
$T$. This proves pathwise uniqueness from the original initial time
in the asserted regularity class, without assuming uniqueness for
arbitrary energy weak solutions.

The same uniqueness argument makes solutions on different finite
horizons agree on their common intervals, so the arbitrary choice
of $T$ gives a unique global process. The initial contraction
requires small time, while the global estimates require only finite
coefficient norms on finite intervals. Neither step imposes smallness
of the initial datum or the wind profiles. \qedhere
\end{step}
\end{proof}
 {\bf Statements and Declarations}

    \medskip

    {\bf Ai Usage Statement} Chatgpt's Astra has been used to verify \autoref{prop-wind-path-regularity}. The proof in its final form was developed by the authors, who take full responsibility for all claims and arguments made in the paper.

\end{document}